\setlist{nolistsep}
\numberwithin{equation}{section}
\newtheorem{theorem}{Theorem}
\newtheorem*{theorem*}{Main Theorem}
\newtheorem{remark}[theorem]{Remark}
\newtheorem{lem}[theorem]{Lemma}
\newtheorem{prop}[theorem]{Proposition}
\newtheorem{alg}[theorem]{Algorithm}
\newtheorem{cor}[theorem]{Corollary}
\numberwithin{theorem}{section}
\newcommand{\tr}{\mathrm{tr}}
\newcommand{\slr}{{\rm SL_2}(\mathbb{R})}
\newcommand{\slk}{{\rm SL_2}(K)}
\newcommand{\slq}{{\rm SL_2}(\mathbb{F}_q)}
\newcommand{\pslk}{{\rm PSL_2}(K)}
\newcommand{\slf}{{\rm SL_2}(F)}
\newcommand{\pslr}{{\rm PSL_2}(\mathbb{R})}
\newcommand{\qp}{\mathbb{Q}_p}
\newcommand{\slqp}{{\rm SL_2}(\qp)}
\newcommand{\pslqp}{{\rm PSL_2}(\qp)}
\newcommand{\psl}{{\rm PSL_2}}
\newcommand{\qq}{\mathbb{Q}}
\newcommand{\oo}{\mathcal{O}_K}
\newcommand{\fix}{{\rm Fix}}
\newcommand{\ax}{{\rm Ax}}
\newtheorem{thm}{Theorem}
\renewcommand*{\thethm}{\Alph{thm}}
\begin{document}
\title[Discrete two-generator subgroups of $\pslk$]{Discrete two-generator subgroups of $\psl$ over non-archimedean local fields}

\author{Matthew J. Conder and Jeroen Schillewaert}
\address{Department of Mathematics, University of Auckland, 38 Princes Street, 1010 Auckland, New Zealand}
\email{\{matthew.conder,j.schillewaert\}@auckland.ac.nz}
\date{}
\maketitle

\begin{abstract}
Let $K$ be a non-archimedean local field with residue field of characteristic $p$. We give necessary and sufficient conditions for a two-generator subgroup $G$ of $\pslk$ to be discrete, where either $K=\mathbb{Q}_p$ or $G$ contains no elements of order $p$. We give a practical algorithm to decide whether such a subgroup $G$ is discrete. We also give practical algorithms to decide whether a two-generator subgroup of either $\slr$ or $\slk$ (where $K$ is a finite extension of $\qp$) is dense. A crucial ingredient for this work is a structure theorem for two-generator groups acting by isometries on a $\Lambda$-tree.
\end{abstract}

{MSC 2020 Classification: 20E08; 22E40. \\ Keywords: Discrete groups, non-archimedean local fields, Bruhat-Tits tree.}

\section{Introduction}

The problem of identifying discrete two-generator subgroups of $\pslr$ has been extensively studied in the literature. A complete classification of such groups, and practical algorithms to decide whether or not a two-generator subgroup of $\pslr$ is discrete, are given in \cite{G,KR,R}. 

Here we prove analogous results for certain two-generator subgroups of $\psl$ over a non-archimedean local field $K$ by studying the action on the associated Bruhat-Tits tree \cite{S}. Our main classification result is the following; see \Cref{thm1'} in Section \textup{\ref{proofs}} for a more detailed statement.

\begin{thm}\label{thm1}
Let $G$ be a discrete two-generator subgroup of $\pslk$, where $K$ is a non-archimedean local field with residue field of characteristic $p$. If $K=\qp$, or $G$ contains no elements of order $p$, then one of the following holds (where $n$ and $m$ are positive integers, parameterised by $p$ as detailed in \Cref{thm1'}:
\begin{enumerate}[label=$(\alph*)$]
\item $G$ is finite, and either cyclic, dihedral, or isomorphic to $A_4$, $S_4$ or $A_5$;
\item $G$ is discrete and free of rank two;
\item $G\cong C_n * C_m$;
\item $G\cong C_n * \mathbb{Z}$;
\item $G \cong C_n \times \mathbb{Z}$, or $G\cong \mathbb{Z}$;
\item $G$ is an HNN extension of either $D_{2n+1}$ or $A_4$;
\item $G$ is isomorphic to either $D_{2n+1} *_{C_2} D_2$, $A_4 *_{C_3} D_3$, $S_4 *_{C_4} D_4$, $A_5 *_{C_3} D_3$ or $A_5 *_{C_5} D_5$.
\end{enumerate}
Moreover, each of these possibilities can occur.
\end{thm}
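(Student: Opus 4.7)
The plan is to analyse the action of $G$ on the Bruhat-Tits tree $T$ associated to $\slk$. Each non-trivial element of $\pslk$ acts on $T$ either as an elliptic isometry (fixing a vertex or inverting an edge) or as a hyperbolic isometry (translating along a unique axis). The hypotheses --- either $K=\qp$, or $G$ has no elements of order $p$ --- guarantee that every finite subgroup of $G$ injects, under conjugation into $\psl(\oo)$ followed by residue-field reduction, into a finite subgroup of $\psl$ over the residue field. By Dickson's classification, such a group must be cyclic, dihedral, or isomorphic to $A_4$, $S_4$ or $A_5$. When $G$ itself is finite this already gives case $(a)$; in general it restricts the vertex and edge stabilizers of the $G$-action on $T$ to groups from this list, with edge stabilizers necessarily cyclic.

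Assuming $G$ is infinite, I would apply the general structure theorem alluded to in the abstract to the two-generator group $G=\langle X,Y\rangle$ acting on the $\mathbb{Z}$-tree $T$. Obtained via Klein-Maskit combination, that theorem should produce, after a suitable Nielsen transformation of $\{X,Y\}$, a presentation of $G$ as the fundamental group of a graph of groups with one or two vertices, finite vertex stabilizers drawn from Dickson's list, and cyclic edge stabilizers. The combinatorial type of this decomposition is determined by whether each generator is elliptic or hyperbolic and by the relative position of the associated fixed sets or axes inside $T$.

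The bulk of the proof is then a case analysis guided by the decomposition. If both generators are hyperbolic with axes far enough apart, a ping-pong argument gives a free group of rank two and hence $(b)$. If both are elliptic with no common fixed point on $T$, ping-pong on their fixed sets gives the free product $C_n*C_m$ of $(c)$. A mixed elliptic-hyperbolic pair in general position gives $(d)$, while commuting pairs and the degenerate cyclic case give $(e)$. When the underlying graph of groups has a single vertex and a loop, $G$ is an HNN extension; restricting to those vertex groups from Dickson's list which admit a hyperbolic element of $\pslk$ conjugating two cyclic subgroups leaves only $D_{2n+1}$ and $A_4$, giving $(f)$. When the graph has two vertices joined by a single edge with non-trivial edge group, the amalgamated subgroup is cyclic and common to both Dickson vertex groups; enumerating which pairs of such groups share which cyclic subgroup and are compatibly embedded in a common $\psl(\oo)$-conjugate produces precisely the five amalgams listed in $(g)$.

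The main obstacle will be the final enumerations in $(f)$ and $(g)$, which require a careful study of the cyclic subgroups of the finite subgroups of $\psl(\oo)$ together with the hyperbolic elements of $\pslk$ that normalise or exchange them; one must show in particular that no other amalgamation or HNN extension of Dickson groups is realisable inside $\pslk$. The ``moreover'' statement is then handled by exhibiting explicit generating matrices in $\slk$ producing an example in each of $(a)$ through $(g)$.
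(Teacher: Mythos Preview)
Your outline is broadly correct and tracks the paper's strategy closely: action on the Bruhat--Tits tree, Dickson's classification for vertex stabilisers, a Klein--Maskit case analysis yielding the free product, HNN, and amalgam decompositions, and explicit matrix examples for the ``moreover'' clause. Two points deserve sharpening, however, since they are where the paper does real work and where your sketch is vaguest.

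First, the linchpin you do not mention is the fixed-point-preservation property: under the hypotheses, every finite-order element $A$ satisfies $\fix(A)=\fix(A^i)$ for all non-trivial powers. The paper proves this by classifying $\fix(A)$ (a single vertex, two adjacent vertices, or a bi-infinite ray) via root-counting for the characteristic polynomial over $\oo/\pi^k\oo$. This is what forces a finite subgroup fixing two vertices to be cyclic or $D_2$, which in turn underlies your claim that ``edge stabilisers are necessarily cyclic'' and, more importantly, forces $\Delta=l(B)$ in the HNN/amalgam case so that $G_0=\langle A, BAB^{-1}\rangle$ rather than a larger group. Without this lemma the overlap analysis in cases $(f)$ and $(g)$ does not get off the ground.

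Second, your explanation of why only $D_{2n+1}$ and $A_4$ occur in $(f)$ --- that only these ``admit a hyperbolic element conjugating two cyclic subgroups'' --- is not the mechanism. All five Dickson groups can arise as $G_0=\langle A,BAB^{-1}\rangle$; the paper shows by explicit trace identities that whenever $G_0\cong S_4$ or $A_5$ there \emph{always} exists $g\in G_0$ with $g$, $gB$, $gBA$ all involutions, i.e.\ a reflection in $\ax(B)$ about the terminal vertex of $\fix(A)\cap\ax(B)$. This reflection forces the amalgam structure of $(g)$ rather than the HNN of $(f)$. Likewise, even dihedral $D_{2n}$ is ruled out by a commuting argument using the fixed-point lemma. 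So the split between $(f)$ and $(g)$ is a geometric dichotomy (absence versus presence of a reflection in $G_0$), not a purely group-theoretic compatibility check.
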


Throughout the paper, $K$ will be used to denote a non-archimedean local field with finite residue field $\mathbb{F}_q$ of characteristic $p$. That is, $K$ is either a finite extension of the $p$-adic numbers $\qp$, or the field of formal Laurent series $\mathbb{F}_q((t))$ \cite{S2}. We will denote the valuation ring of $K$ by $\oo$, and the uniformiser of $K$ by $\pi$.

When $K=\mathbb{F}_q((t))$, every order $p$ element in $\pslk$ is unipotent \cite[p. 964]{L}. Since cocompact discrete subgroups of $\pslk$ do not contain unipotent elements \cite[p. 10]{GGPS}, we immediately obtain the following.

\begin{cor}
The conclusion of Theorem \textup{\ref{thm1}} holds for discrete and cocompact two-generator subgroups of $\psl(\mathbb{F}_q((t)))$.
\end{cor}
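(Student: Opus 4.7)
The plan is to verify that the hypothesis of Theorem~\ref{thm1} is automatically satisfied for discrete cocompact two-generator subgroups of $\psl(\mathbb{F}_q[[t]])$, so that the classification transfers directly. Since $K=\mathbb{F}_q[[t]]$ is not $\qp$, I must show that such a subgroup $G$ contains no element of order $p$, and then appeal to Theorem~\ref{thm1}.

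First I would let $G$ be discrete and cocompact in $\psl(\mathbb{F}_q[[t]])$, and suppose for contradiction that $g\in G$ has order $p$. By the first fact recalled in the excerpt (from \cite{L}), every order $p$ element of $\pslk$ in this positive-characteristic setting is unipotent, so $g$ is unipotent. On the other hand, the second fact recalled (from \cite{GGPS}) says that a discrete cocompact subgroup of $\pslk$ contains no unipotent elements. This directly contradicts the existence of $g$, so $G$ has no element of order $p$.

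Now both clauses in the hypothesis of Theorem~\ref{thm1} are available: even though $K\neq \qp$, the alternative condition (no order-$p$ elements) holds. Applying Theorem~\ref{thm1} to $G$ gives the conclusion.

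The only subtle point is that we are importing two external statements verbatim, so there is no genuine obstacle — the work was already done in citing \cite{L} and \cite{GGPS}. If anything needed care, it would be checking that the cited results apply at the level of $\pslk$ rather than $\slk$ (unipotent elements descend to unipotent images and order $p$ is preserved since $p$ is odd in the relevant cases, or more directly because the kernel $\{\pm I\}$ has order coprime to $p$ when $p$ is odd, and in characteristic $2$ the group $\slk=\pslk$), but this is routine.
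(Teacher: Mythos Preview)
Your argument is correct and is exactly the paper's approach: the paper deduces the corollary in one sentence by combining the fact that order-$p$ elements in $\psl(\mathbb{F}_q[[t]])$ are unipotent with the fact that discrete cocompact subgroups contain no unipotent elements, and then applies Theorem~\ref{thm1}. Your additional remark about passing between $\slk$ and $\pslk$ is fine but unnecessary, since the cited references already work at the level of $\pslk$.
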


\begin{remark}\label{congruence}
As detailed in Theorem \textup{\ref{thm1'}} (see Section \textup{\ref{proofs}}), cases $(f)$ and $(g)$ only occur for certain congruence classes of $q$. 
\end{remark}

\begin{remark}
All groups we obtain in case $(g)$ are of the form $G_1 *_{G_3} G_2$, where $G_3$ is a maximal cyclic subgroup of both $G_1$ and $G_2$. These are also described in case $(2)$ of \textup{\cite[Theorem 3.5]{VV}}, which gives a list of discrete finitely generated subgroups of ${\rm PGL}_2$ over a $p$-adic field.
\end{remark}

\begin{remark}
Our proof of Theorem \textup{\ref{thm1}} relies heavily on the fact that the set of points of the Bruhat-Tits tree fixed by a finite order element of $G$ is preserved under non-trivial powers; see Corollary \textup{\ref{fixeq}}. If $K \neq \qp$, then $G$ may contain elements of order $p$ with fixed point sets which are not necessarily preserved under non-trivial powers \textup{\cite[Theorem 4.2]{LW}}. We expect that significant further analysis will be required to complete an analogous classification for discrete two-generator subgroups of $\pslk$ which contain elements of order $p$.
\end{remark}

\Cref{thm1} is a consequence of the following geometric theorem with corresponding labelling. Here, and throughout the paper, we will use $l(X)$ to denote the translation length of an isometry $X$ of a tree $T$. We will also use $\fix(A)$ to denote the fixed point set of an elliptic isometry $A$ of $T$, and $\ax(B)$ to denote the axis of a hyperbolic isometry $B$ of $T$. Given a finite subpath $P=[x,y]$ of $\ax(B)$, where $B$ translates $x$ towards $y$, we also refer to $x$ (respectively $y$) as the initial (respectively terminal) vertex of $P$.

\begin{thm}\label{thm2}
Let $G$ be a two-generator subgroup of $\pslk$. If $K=\qp$, or $G$ contains no elements of order $p$, then $G$ is discrete if and only if there exists a generating pair $(A,B)$ for $G$ such that one of the following holds: 
\begin{enumerate}[label=$(\alph*)$]
\item $A$ and $B$ are elliptic with $\fix(A) \cap \fix(B) \neq \varnothing$, and $G=\langle A, B \rangle$ is finite;
\item $A$ and $B$ are hyperbolic, and $\ax(A) \cap \ax(B)$ is either empty or a path of length $\Delta<\min\{l(A),l(B)\}$; 
\item $A$ and $B$ are elliptic of finite order and $\fix(A) \cap \fix(B) = \varnothing$;
\item $A$ is elliptic of finite order, $B$ is hyperbolic of minimal translation length among the elements $\{A^iB : i \in \mathbb{Z}\}$, and $\fix(A) \cap \ax(B)$ is either empty or a path of length $\Delta<l(B)$;
\item $A$ is elliptic of finite order, $B$ is hyperbolic, and $A$ and $B$ commute;
\item $A$ is elliptic of finite order, $B$ is hyperbolic and $\fix(A) \cap \ax(B)$ is a path $P$ of length $\Delta=l(B)$. The group $G_0=\langle A, BAB^{-1}\rangle$ is finite and does not contain a reflection in $\ax(B)$ about the terminal vertex of $P$;
\item $A$ is elliptic of finite order, $B$ is hyperbolic and $\fix(A) \cap \ax(B)$ is a path $P$ of length $\Delta=l(B)$. The group $G_0=\langle A, BAB^{-1} \rangle$ is finite and contains a reflection in $\ax(B)$ about the terminal vertex of $P$.\end{enumerate}
\end{thm}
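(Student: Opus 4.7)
The plan is to exploit the action of $G$ on the Bruhat--Tits tree $T$ of $\pslk$, where every element acts as either an elliptic or a hyperbolic isometry. A generating pair $(A,B)$ then falls into one of three coarse types (elliptic-elliptic, hyperbolic-hyperbolic, mixed), and within each type the further subdivision is dictated by the relative position of $\fix(A),\fix(B),\ax(A),\ax(B)$. The backbone of the argument is the general structure theorem for two-generator groups acting on $\Lambda$-trees promised in the abstract, which in turn rests on the Klein--Maskit combination theorems.

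For sufficiency, I would argue case by case. In (a), finiteness forces discreteness since $\pslk$ is totally disconnected. Cases (b) and (c) are classical ping-pong arguments on $T$, yielding a free product structure and hence discreteness. In (d) the minimality of $l(B)$ among $\{A^iB\}$ is exactly what is needed to produce a Schottky-type fundamental domain for $\langle A\rangle$ and $\langle B\rangle$ via Klein--Maskit, so $G\cong \langle A\rangle * \langle B\rangle$. In (e), commutativity forces $A$ to act trivially on $\ax(B)$, giving $G\cong \langle A\rangle\times\langle B\rangle$. Cases (f) and (g) require recognising $G$ as an HNN extension or amalgamated free product respectively: the finite group $G_0=\langle A,BAB^{-1}\rangle$ plays the role of the vertex group, and the presence or absence of a reflection in $\ax(B)$ controls whether the edge stabiliser comes from a stable letter (HNN, case (f)) or from a second vertex group generated by $B$ together with $G_0$ (amalgamation, case (g)). In each case Bass--Serre theory yields a tree of groups whose realisation is discrete.

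For necessity, I would start from an arbitrary generating pair of a discrete $G$ and apply Nielsen transformations to minimise a geometric complexity (typically $l(A)+l(B)$, then intersection lengths, with appropriate tie-breaking). A minimal pair must fall into one of the listed normal forms, since otherwise one could find a substitution such as $A\mapsto A^iB$ that decreases the complexity --- the condition in (d) is precisely the statement that such reductions are exhausted. The main obstacle will be the borderline cases (f) and (g), where $\fix(A)\cap\ax(B)$ is a path of length exactly $l(B)$: here the conjugate $BAB^{-1}$ shares fixed points with $A$, forcing $G_0$ to lie in a single vertex stabiliser and hence to be finite. Proving that $G_0$ is indeed finite and identifying when a reflection appears requires the fact that $\fix(A)=\fix(A^k)$ for nontrivial powers, i.e.\ Corollary \ref{fixeq}; this is exactly where the hypothesis that $K=\qp$ or $G$ has no order-$p$ elements is used, and it is the delicacy of this fixed-point rigidity, together with the Bass--Serre bookkeeping needed to distinguish HNN from amalgamation, that I expect to be the technical heart of the proof.
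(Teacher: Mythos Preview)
Your plan is essentially the paper's: reduce an arbitrary generating pair via Nielsen moves on $l(A)+l(B)$ (this is exactly case $(7)$ of Theorem~\ref{lem} together with Remark~\ref{reduction}) to land in one of cases $(1)$--$(6)$, and then analyse each case using the Klein--Maskit combination theorems, with Corollary~\ref{fixeq} supplying the fixed-point rigidity in the borderline cases. The one ingredient you leave vague is the discreteness mechanism: rather than invoking Bass--Serre realisations, the paper uses the concrete criterion of Corollary~\ref{disc-stab} (a subgroup of $\pslk$ is discrete iff some vertex stabiliser in $T_q$ is finite), and the combination theorems are arranged precisely to identify that stabiliser as $\{e\}$, $\langle A\rangle$, or $G_0$ in the respective cases---this same stabiliser bookkeeping, via Lemma~\ref{k}, is also what forces $\Delta=l(B)$ exactly (not merely $\Delta\ge l(B)$) in cases $(f)$ and $(g)$.
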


Using Theorems \ref{thm1} and \ref{thm2}, we obtain a practical algorithm (\Cref{discalg}) to decide whether or not a two-generator subgroup $G$ of $\pslk$ (where either $K=\qp$, or $G$ contains no elements of order $p$) is discrete. If $G$ is discrete, then the algorithm returns the isomorphism type of $G$ according to \Cref{thm1}. Following a suggestion by Pierre-Emmanuel Caprace, we also obtain practical algorithms (Algorithms \ref{density-algorithm} and \ref{density-algorithm-R}) to decide whether or not a two-generator subgroup of either $\slr$ or $\slk$ (where $K$ is a finite extension of $\qp$) is dense. All three of these algorithms have been implemented in {\sc Magma} \cite{Magma} over appropriate subfields \cite{C2}.
\smallskip

{\bf Outline of paper:} In Section \ref{sec:Auxiliary}, we prove a structure theorem (Theorem \ref{lem}) for (not necessarily discrete) two-generator groups acting by isometries on $\Lambda$-trees, which hinges on Klein-Maskit combination theorems given in \cite[VII]{M}. In Section \ref{sec:psl}, we consider subgroups of $\pslk$ acting on the corresponding Bruhat-Tits tree $T_q$. Crucial to our results is the classification of the fixed point sets in $T_q$ of certain finite order elements of $\pslk$; see \Cref{ordern}. The rest of this section is devoted to a systematic investigation of Theorem \ref{lem} for two-generator subgroups of $\pslk$. In Section \ref{proofs}, we prove Theorem \ref{thm2} and use it to prove Theorem \ref{thm1'}. The latter proof relies on examples provided in Section \ref{sec:examples}.
Finally, in Section \ref{sec:algorithms}, we present the practical algorithms identified above and discuss their implementation.

\section{Two-generator groups acting on trees}\label{sec:Auxiliary}

The following theorem is fundamental to our proof of \Cref{thm1}. It is stated in the context of $\Lambda$-trees, a wide class of metric spaces which includes $\mathbb{R}$-trees and simplicial trees; see \cite{Chis}. The theorem crucially relies on Klein-Maskit combination theorems given in \cite[VII]{M}, and it also generalises Lemmas 2.1 and 2.2 of \cite{KW}, which deal with cases $(3)$ and $(4)$. By replacing a $\Lambda$-tree $T$ by an appropriate subdivision if necessary, we may assume that every isometry of $T$ acts without inversions and is hence either elliptic or hyperbolic; see Lemma \textup{1.3} of \textup{\cite[Chapter 3]{Chis}}.

\begin{thm}\label{lem}
Let $G = \langle A, B \rangle$ be a group acting by isometries on a $\Lambda$-tree $T$. After interchanging the roles of $A$ and $B$ if necessary, precisely one of the following holds:
\begin{enumerate}[label={$(\arabic*)$}]
\item $A$ and $B$ are elliptic, $\fix(A) \cap \fix(B) \neq \varnothing$, and $G$ fixes a point of $T$.
\item $A$ and $B$ are hyperbolic, $\ax(A) \cap \ax(B)$ is either empty or a path of length $\Delta<\min\{l(A),l(B)\}$, and $G=\langle A \rangle * \langle B \rangle$. Moreover, ${\rm Stab}_G(y)=\{ e \}$ for every $y \in \ax(A) \cup \ax(B)$. 
\item $A$ and $B$ are elliptic, $\fix(A) \cap \fix(B) = \varnothing$, and one of the following holds:
\begin{enumerate}[label={$(\roman*)$}]
\item $G=\langle A \rangle * \langle B \rangle$, ${\rm Stab}_G(x)=\langle A \rangle$ for every $x \in \fix(A)$ and ${\rm Stab}_G(y)=\langle B \rangle$ for every $y \in \fix(B)$;
\item $\fix(A^i) \cap \fix(B^j) \neq \varnothing$ for some non-trivial powers $A^i$ and $B^j$.
\end{enumerate}

\item $A$ is elliptic, $B$ is hyperbolic, $\fix(A)\cap \ax(B)$ is empty or a path of length $\Delta<l(B)$, and one of the following holds:
\begin{enumerate}[label={$(\roman*)$}]
\item $G=\langle A \rangle * \langle B \rangle$ and ${\rm Stab}_G(y)=\langle A \rangle$ for all $y \in \fix(A)$;
\item $A^iB$ is elliptic for some non-trivial power $A^i$;
\item $(\bigcup_{i; A^i \neq e} \fix(A^i)) \cap \ax(B)$ is a path of length $\Delta' \ge l(B)$.
\end{enumerate}
\item $A$ is elliptic, $B$ is hyperbolic, $\fix(A) \cap \ax(B)$ contains a geodesic ray, and one of the following holds:
\begin{enumerate}[label={$(\roman*)$}]
\item $A$ commutes with a power of $B$ and $\ax(B) \subseteq \fix(A)$; 
\item ${\rm Stab}_G(y)$ is infinite for every $y \in \fix(A) \cap \ax(B)$.
\end{enumerate}
\item $A$ is elliptic, $B$ is hyperbolic, $\fix(A) \cap \ax(B)$ is a finite path of length $\Delta \ge l(B)$ (with initial vertex $x$ and terminal vertex $y$), and one of the following holds, where $G_0=\langle A, BAB^{-1}, \dots, B^kAB^{-k} \rangle$ and $k=\lfloor \frac{\Delta}{l(B)} \rfloor$:
\begin{enumerate}[label={$(\roman*)$}]
\item $G$ is an HNN extension $G_0 *_B$, and $G_0={\rm Stab}_G(y)$;
\item $G_0$ contains a subgroup $H$ which fixes the path $[y, B^{k+1}x]$ and properly contains the subgroup $\langle BAB^{-1}, \dots, B^kAB^{-k} \rangle$;
\item $\Delta=k\cdot l(B)$ and there exists $g \in G_0$ such that $gBy=B^{-1}y$.
\end{enumerate}
\item $A$ and $B$ are hyperbolic, $\ax(A) \cap \ax(B)$ is a path of length $\Delta \ge \min\{l(A),l(B)\}$, and $\min\{l(A),l(B)\}+\min\{l(AB),l(A^{-1}B)\} < l(A)+l(B)$.
\end{enumerate}
\end{thm}

\begin{remark}\label{reduction}
Suppose that there is a positive lower bound on the translation length of all hyperbolic elements in $G$ (for instance, when $T$ is a simplicial tree). In case $(7)$ of Theorem \textup{\ref{lem}}, repeatedly replacing an element of $\{A,B\}$ with maximal translation length by an element of $\{AB,A^{-1}B\}$ with minimal translation length will strictly reduce the sum of the translations lengths of the generators. Hence, after a finite number of steps, this process will produce a generating pair for $G$ such that one of cases $(1)-(6)$ of Theorem \textup{\ref{lem}} occurs. Note that a similar argument is used in \textup{\cite[Algorithm 4.1]{C}}.
\end{remark}

\begin{proof}
\underline{Case $(1)$}: $G$ fixes every point of $\fix(A) \cap \fix(B)$.

\underline{Case $(2)$}: Let $S$ be an open segment of $\ax(B)$ of length $l(B)$ which contains $\pi_B(\ax(A))$, where $\pi_B : T \to \ax(B)$ is the geodesic projection map. Let $G_1=\langle A \rangle$ and $G_2=\langle B \rangle$ with common subgroup $J=\{e\}$, and set $X_2=\pi_B^{-1}(S)$ and $X_1=T\backslash X_2$. Observe that $X_1$ and $X_2$ are invariant under $J$, $g_1(X_1) \subseteq X_2$ for every $g_1 \in G_1 \backslash \{e\}$, and $g_2(X_2) \subseteq X_1$ for every $g_2 \in G_2 \backslash \{e\}$; see \Cref{case2} for the case where $\ax(A) \cap \ax(B) \neq \varnothing$. Also no element of $\ax(A) \subseteq X_2$ can be the image of a point in $X_1$ under $G_1$. In the terminology of \cite[VII.A]{M}, $(X_1,X_2)$ is therefore a proper interactive pair for the groups $G_1, G_2$ and $J$. Hence $G=G_1 *_J G_2=\langle A \rangle * \langle B \rangle$ by Theorem A.10 of \cite[VII]{M}. Moreover, no non-trivial word in $(A,B)$ can stabilise a point in $\ax(A)$, so ${\rm Stab}_G(x)=\{e\}$ for every point $x \in \ax(A)$. By switching the roles of $A$ and $B$, it also follows that ${\rm Stab}_G(y)=\{ e \}$ for every $y \in \ax(B)$.

\begin{figure}[h!]
\centering
\begin{tikzpicture}
  [scale=0.8,auto=left] 

\draw [>->] (-1.5,2.5) to (-1.5,0) to (1.5, 0) to (1.5,2.5);

\node at (-1.5,3) {$\ax(A)$}; \node at (1.5,3) {$\ax(A)$};

\node[circle,inner sep=0pt,minimum size=3,fill=black] (1) at (-2.5,0) {};
\node[circle,inner sep=0pt,minimum size=3,fill=black] (1) at (2.5,0) {};
\draw [|-|, dashed] (-2.5,-0.3) to (2.5, -0.3);\node at (0,-0.7) {$l(B)$};

\draw [>->] (-4,0) to (4,0); 
\node at (-5,0) {$\ax(B)$}; \node at (5,0) {$\ax(B)$};

\draw [dotted] (-2.4,-2.5) to (-2.4,2);
\draw [dotted] (2.4,-2.5) to (2.4,2);

\node at (4,-2.5) {$X_1$};
\node at (-4,-2.5) {$X_1$};
\node at (0,-2.5) {$X_2$};

\end{tikzpicture} 
\caption{Combination theorem sets for \Cref{lem} $(2)$}\label{case2}
\end{figure}
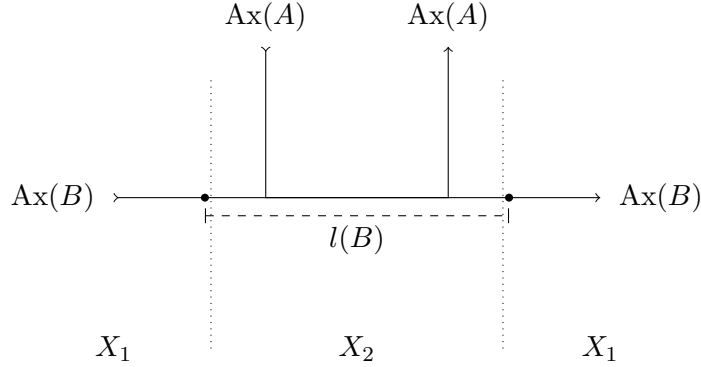

\underline{Case $(3)$}: We may suppose that the subtrees $F(A)=\bigcup_{i; A^i \neq e} \fix(A^i)$ and $F(B)=\bigcup_{j; B^j \neq e} \fix(B^j)$ are disjoint, otherwise subcase $(ii)$ holds. We then argue as in case $(2)$, but with $F(A)$ and $F(B)$ playing the roles of $\ax(A)$ and $\ax(B)$. Indeed, let $S=[a,b]$ be the unique geodesic from $F(A)$ to $F(B)$ and set $G_1=\langle A \rangle$, $G_2=\langle B \rangle$ and $J=\{e\}$. Let $\pi_S : T \to S$ be the geodesic projection map. Observe that $X_1=\pi_S^{-1}(b)$ and $X_2=\pi_S^{-1}(a)$ are invariant under $J$, $g_1(X_1) \subseteq X_2$ for every $g_1 \in G_1 \backslash \{e\}$, and $g_2(X_2) \subseteq X_1$ for every $g_2 \in G_2 \backslash \{e\}$; see \Cref{case3}. Moreover, no element of $\fix(A) \subseteq X_2$ can be the image of a point in $X_1$ under $G_1$. Hence $G=\langle A \rangle * \langle B \rangle$ by Theorem A.10 of \cite[VII]{M}. The only words in $(A,B)$ that stabilise points in $\fix(A)$ are powers of $A$, so ${\rm Stab}_G(x)=\langle A \rangle$ for every point $x \in \fix(A)$ and, similarly, ${\rm Stab}_G(y)=\langle B \rangle$ for every point $y \in \fix(B)$.

\begin{figure}[h!]
\centering
\begin{tikzpicture}
  [scale=0.8,auto=left] 

\draw (-4,0) circle (1.5cm); \node at (-4,0) {$F(A)$};
\draw (4,0) circle (1.5cm); \node at (4,0) {$F(B)$};

\node[circle,inner sep=0pt,minimum size=3,fill=black] (1) at (-2.5,0) {};
\node at (-2.8,0) {$a$};
\node[circle,inner sep=0pt,minimum size=3,fill=black] (1) at (2.5,0) {};
\node at (2.8,0) {$b$};

\draw (-2.5,0) to (2.5,0);
\node at (0,0.4) {$S$};

\draw [dotted] (-2.4,-2.5) to (-2.4,2);
\draw [dotted] (2.4,-2.5) to (2.4,2);

\node at (4,-2.5) {$X_1$};
\node at (-4,-2.5) {$X_2$};

\end{tikzpicture} 
\caption{Combination theorem sets for \Cref{lem} $(3)$}\label{case3}
\end{figure}

\underline{Case $(4)$}: Let $F(A)=\bigcup_{i; A^i \neq e} \fix(A^i)$. We may suppose that $F(A) \cap \ax(B)$ is either empty or a path of length $\Delta' < l(B)$, as otherwise subcase $(iii)$ occurs. Let $\pi_B: T \to \ax(B)$ be the geodesic projection map and let $S=(u,v)$ be an open segment of $\ax(B)$ of length $l(B)$ which contains $\pi_B(F(A))$. Let $G_1=\langle A \rangle$, $G_2=\langle B \rangle$ and $J=\{e\}$, and set $X_2=\pi_B^{-1}(S)$ and $X_1=T\backslash X_2$. Observe that $X_1$ and $X_2$ are invariant under $J$, $g_2(X_2) \subseteq X_1$ for every $g_2 \in G_2 \backslash \{e\}$, and no element of $\fix(A) \subseteq X_2$ can be the image of a point in $X_1$ under $G_1$; see \Cref{case4} for the case where $F(A)\cap \ax(B) \neq \varnothing$. If additionally $g_1(X_1) \subseteq X_2$ for every $g_1 \in G_1 \backslash \{e\}$, then $G=\langle A \rangle * \langle B \rangle$ and ${\rm Stab}_G(y)=\langle A \rangle$ for every $y \in \fix(A)$ by the same argument as in the previous case. Without loss of generality, we may hence suppose that there exists $g \in G_1 \backslash \{e\}$ such that $g\cdot v \in X_1$. Hence $\pi_B(g\cdot v) \notin S$, thus $\fix(g) \cap \ax(B)$ is a single point $x$ and $g$ maps $v$ to the unique other point of $\ax(B)\backslash S$ which is equidistant from $x$.
In particular, $g$ maps one of the two points of $\ax(B)$ at distance $\frac{l(B)}{2}$ from $x$ to the other or, equivalently, there is an integer $i$ such that $A^iB$ is elliptic. 

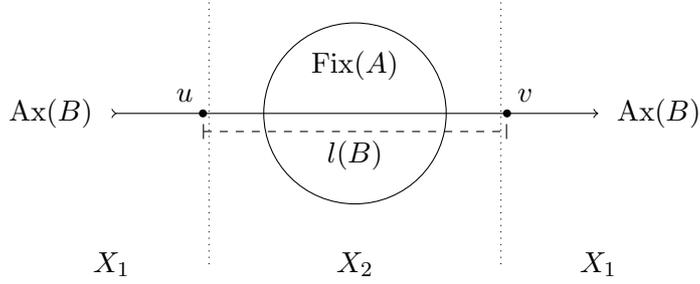
\begin{figure}[h!]
\centering
\begin{tikzpicture}
  [scale=0.8,auto=left] 

\draw (0,0) circle (1.5cm); \node at (0,0.8) {$F(A)$};

\node[circle,inner sep=0pt,minimum size=3,fill=black] (1) at (-2.5,0) {};
\node[circle,inner sep=0pt,minimum size=3,fill=black] (1) at (2.5,0) {};
\draw [|-|, dashed] (-2.5,-0.3) to (2.5, -0.3);\node at (0,-0.7) {$l(B)$};

\node at (-2.8,0.3) {$u$}; \node at (2.8,0.3) {$v$};

\draw [>->] (-4,0) to (4,0); 
\node at (-5,0) {$\ax(B)$}; \node at (5,0) {$\ax(B)$};

\draw [dotted] (-2.4,-2.5) to (-2.4,2);
\draw [dotted] (2.4,-2.5) to (2.4,2);

\node at (4,-2.5) {$X_1$};
\node at (-4,-2.5) {$X_1$};
\node at (0,-2.5) {$X_2$};

\end{tikzpicture} 
\caption{Combination theorem sets for \Cref{lem} $(4)(i)$}\label{case4}
\end{figure}

\underline{Case $(5)$}: Let $y \in \fix(A) \cap \ax(B)$. After replacing $B$ by $B^{-1}$ if necessary, we may assume that $B^iy \in \fix(A) \cap \ax(B)$ for every positive integer $i$. Hence $B^{-i}AB^i \in {\rm Stab}_G(y)$ for every positive integer $i$, so either ${\rm Stab}_G(y)$ is infinite (which is case $(5)(ii)$), or there are positive integers $i<j$ such that $B^{-i}AB^i=B^{-j}AB^j$. Suppose that the latter case holds, so that $A$ commutes with $B^{j-i}$ and $\ax(B)=\ax(B^{j-i})=\ax(AB^{j-i}A^{-1})=A\cdot \ax(B)$. If $x \in \ax(B) \backslash \fix(A)$, then $Ax \in \ax(B)$ and it follows that $Ax=x$, which is a contradiction. Thus $\ax(B)\subseteq \fix(A)$ and case $(5)(i)$ holds.

\underline{Case $(6)$}: Let $\pi_B: T \to \ax(B)$ be the geodesic projection map and let $b_-, b_+$ denote the ends of $\ax(B)$ on the boundary of $T$, where $B$ translates in the direction of $b_+$. Let $J_1 = \langle A,BAB^{-1},\cdots,B^{k-1}AB^{-(k-1)} \rangle$ and $J_2 = \langle BAB^{-1},\cdots,B^kAB^{-k} \rangle$ be subgroups of $G_0$, and let $Z= \pi_B^{-1}([B^kx,B^{k+1}x))$, $X_1 = \pi_B^{-1}((b_-,B^kx))$ and $X_2 = \pi_B^{-1}([B^{k+1}x,b_+))$. Observe that $B(Z \cup X_2) \subseteq X_2$ and $B^{-1}(Z \cup X_1) \subseteq X_1$; see \Cref{HNN}. Note that $J_1$ fixes the non-trivial path $[B^{k-1}x,B^kx]$ and it follows that $X_1$ is precisely invariant under $J_1$, that is, ${\rm Stab}_{G_0}(X_1)=J_1$ and $g(X_1)\cap X_1 = \varnothing$ for each $g \in G_0\backslash J_1$.  Moreover, $y\in Z$ is fixed by $G_0$ and thus not a $G_0$-translate of a vertex in $X_1$ or $X_2$. If additionally $X_2$ is precisely invariant under $J_2$, $g(X_1) \subseteq Z \cup X_1$ and $g(X_2) \subseteq Z \cup X_2$ for each $g \in G_0$, then, in the terminology of \cite[VII.D]{M}, $(Z,X_1,X_2)$ is a proper interactive triple for $G=\langle G_0,B \rangle$. Thus $G=G_0 *_B$ by Theorem D.12 of \cite[VII]{M} and also $G_0={\rm Stab}_G(y)$, which can be seen by applying Lemma D.11 of \cite[VII]{M} to $y$.
 Hence we may suppose that one of these additional three conditions fails.

If $X_2$ is not precisely invariant under $J_2$, then there exists $g \in G_0 \backslash J_2$ and $v \in X_2$ such that $gv \in X_2$. Since $G_0$ fixes $y$, and $B^{k+1}x$ is the unique closest point of $X_2$ to $y$, it follows that $g$ fixes $B^{k+1}x$. Thus the subgroup $H=\langle J_2,g \rangle$ of $G_0$ fixes $[y,B^{k+1}x]$ and properly contains $J_2$, which is subcase $(ii)$. 

On the other hand, suppose that there exists $g\in G_0$ such that $g(X_2) \not\subseteq Z \cup X_2$ (respectively $g(X_1) \not\subseteq Z \cup X_1$). Since $G_0$ fixes $[B^kx,y]$, it follows that $y=B^kx$, and $g\cdot By=B^{-1}y$ (respectively $g^{-1} \cdot By=B^{-1}y$). This corresponds to subcase $(iii)$.

\begin{figure}[h!]
\centering
\begin{tikzpicture}
  [scale=0.8,auto=left] 

\draw (0,0) circle (2cm); \node at (0,-1) {$\fix(A)$};

\node[circle,inner sep=0pt,minimum size=3,fill=black] (1) at (-2,0) {};
\node at (-2.3,0.3) {$x$};
\node[circle,inner sep=0pt,minimum size=3,fill=black] (1) at (1,0) {};
\node at (1.4,0.4) {$B^kx$};
\node[circle,inner sep=0pt,minimum size=3,fill=black] (1) at (2,0) {};
\node at (2.3,0.3) {$y$};
\node[circle,inner sep=0pt,minimum size=3,fill=black] (1) at (3,0) {};
\node at (3.6,0.4) {$B^{k+1}x$};

\draw [>->] (-5,0) to (5,0);
\node at (-5.5,0) {$b_-$};
\node at (5.5,0) {$b_+$};
\node at (-3.5,-0.4) {$\ax(B)$};

\draw [dotted] (0.8,-3) to (0.8,2.5);
\draw [dotted] (2.8,-3) to (2.8,2.5);

\node at (-1,-3) {$X_1$};
\node at (1.8,-3) {$Z$};
\node at (4,-3) {$X_2$};

\end{tikzpicture} 
\caption{Combination theorem sets for \Cref{lem} $(6)(i)$}\label{HNN}
\end{figure}
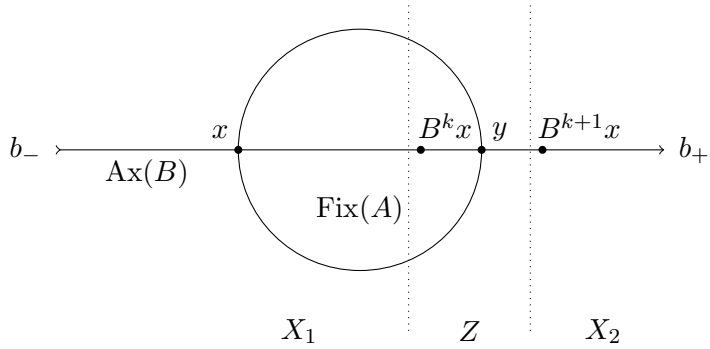

\underline{Case $(7)$}: 
We may assume that $l(A) \le l(B)$. By Lemmas 3.4 and 3.5 of \cite[Chapter 3]{Chis}, $\min\{l(AB),l(A^{-1}B)\} \le l(B)-l(A)$, from which the conclusion follows immediately.
\end{proof}

\section{Subgroups of $\pslk$ and their actions on the Bruhat-Tits tree}\label{sec:psl}

In this section, we consider subgroups of $\pslk$ acting on the corresponding Bruhat-Tits tree $T_q$, where $K$ is a non-archimedean local field with finite residue field $\mathbb{F}_q$ of characteristic $p$. Note that we view $\pslk$ as a subgroup of the isometry group of $T_q$, so that $\pslk$ inherits the topology of pointwise convergence from the product topology on $T_q^{T_q}$. 

As we will frequently refer to it throughout the paper, we state below a lemma of Kato describing necessary and sufficient conditions for discreteness of subgroups of $\pslk$.

\begin{lem}{\cite[Lemma 4.4.1]{K}}\label{disc-stab}
Let $G$ be a subgroup of $\pslk$.
\begin{enumerate}[label={$(\arabic*)$}]
\item If $G$ is discrete, then ${\rm Stab}_G(y)$ is finite for every vertex $y$ of $T_q$.
\item If ${\rm Stab}_G(y)$ is finite for some vertex $y$ of $T_q$, then $G$ is discrete.
\end{enumerate}
\end{lem}

The translation length of each element of $\pslk$ on $T_q$ is given by the following formula, where $v$ is the discrete valuation associated to $K$.

\begin{prop}{\textup{\cite[Proposition II.3.15]{MS}}}\label{TL}
The translation length of $A\in \pslk$ on $T_q$ is given by $$l(A)=-2\min\{0,v(\tr(\bar{A}))\},$$ where $\bar{A}$ is either of the two representatives of $A$ in $\slk$.
\end{prop}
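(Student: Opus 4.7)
The plan is to compute \(l(\bar A)\) directly from the lattice model of \(T_q\): vertices are homothety classes \([L]\) of rank-\(2\) \(\oo\)-lattices \(L \subset K^2\), the base vertex is \(v_0 = [\oo \oplus \oo]\), and \(\glk\) acts via \(M \cdot [L] = [ML]\). I would first establish the displacement formula
\[
  d(v_0,\, Mv_0) \;=\; -2\min\{0,\, v(M_{ij})\} \qquad (M\in\slk)
\]
by rescaling \(M\) by the appropriate power of \(\pi\) so that \(\pi^k M\) lies in \(\mathrm{Mat}_2(\oo)\) with at least one unit entry, and then reading off the elementary divisors of the inclusion \(\pi^k M\, \oo^2 \subseteq \oo^2\) from Smith normal form. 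Vertex transitivity of \(\glk\) on \(T_q\) then lets me write every vertex as \(Bv_0\), so
\[
  l(\bar A) \;=\; \inf_{B\in\glk} d(v_0,\, B^{-1}\bar A B\, v_0) \;=\; -2\max_{B}\min\{0,\, v((B^{-1}\bar A B)_{ij})\}.
\]

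For the lower bound, the non-archimedean triangle inequality gives \(v(\tr M) \ge \min_{i,j} v(M_{ij})\) for any \(M \in \mathrm{Mat}_2(K)\); combined with conjugation-invariance of the trace, this forces \(\min\{0, v((B^{-1}\bar A B)_{ij})\} \le \min\{0, v(\tr\bar A)\}\) for every \(B\), and hence \(l(\bar A) \ge -2\min\{0, v(\tr\bar A)\}\).

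For the matching upper bound I would split on the sign of \(v(\tr\bar A)\). When \(v(\tr\bar A) \ge 0\), the characteristic polynomial \(\lambda^2 - \tr(\bar A)\lambda + 1\) has \(\oo\)-coefficients, so its roots \(\lambda, \lambda^{-1}\) are integral over \(\oo\) and therefore units in \(\mathcal{O}_{K'}\), where \(K' = K(\lambda)\) has degree at most two over \(K\). If \(K' = K\), the direct sum of \(\oo\)-spans of the eigenvectors is an \(\bar A\)-stable rank-\(2\) \(\oo\)-lattice; if \([K':K] = 2\), identifying \(K^2\) with \(K'\) as \(K[\bar A]\)-modules shows \(\mathcal{O}_{K'}\) pulls back to an \(\bar A\)-stable rank-\(2\) \(\oo\)-lattice in \(K^2\). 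Either way \(\bar A\) fixes a vertex, so \(l(\bar A) = 0\). When \(v(\tr\bar A) < 0\), I would first rule out eigenvalues in a quadratic extension \(K'/K\): uniqueness of the extension of the valuation forces the Galois-conjugate eigenvalues \(\lambda\) and \(\lambda^{-1}\) to have equal \(K'\)-valuation, but \(\lambda\cdot\lambda^{-1} = 1\) then forces both valuations to be zero, contradicting \(v(\tr\bar A) < 0\). Hence the eigenvalues lie in \(K\), \(\bar A\) is \(\glk\)-conjugate to \(\mathrm{diag}(\lambda, \lambda^{-1})\) with \(v(\lambda) = v(\tr\bar A)\), and the displacement formula yields exactly \(-2v(\lambda)\), closing the inequality.

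The main technical subtlety is producing the stable lattice in the non-split elliptic case (ramified or even wildly ramified quadratic extensions in residue characteristic \(2\)); routing through \(\mathcal{O}_{K'}\) via the \(K[\bar A]\)-module identification handles this uniformly and is the only step that uses lattice-theoretic input beyond pure trace manipulation.
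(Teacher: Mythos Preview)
The paper does not prove this proposition at all; it is simply quoted as \cite[Proposition II.3.15]{MS} and used as a black box throughout. So there is no argument in the paper to compare yours against.

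Your proof is essentially correct and gives a clean self-contained derivation. The displacement formula via Smith normal form, the lower bound from the ultrametric inequality applied to the trace, and the hyperbolic case (ruling out a quadratic extension by Galois-invariance of the unique extended valuation, then diagonalising over $K$) are all fine. There is one small gap in the elliptic case $v(\tr\bar A)\ge 0$ with $K'=K$: you produce a stable lattice as ``the direct sum of $\oo$-spans of the eigenvectors'', but this breaks down when $\bar A$ has a repeated eigenvalue $\lambda=\pm 1$ and is not diagonalisable (for instance $\bar A$ conjugate to $\left(\begin{smallmatrix}1&1\\0&1\end{smallmatrix}\right)$), since there is then only a one-dimensional eigenspace. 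The fix is routine: either conjugate to upper-triangular form and rescale the superdiagonal entry into $\oo$, or more uniformly note that Cayley--Hamilton gives $\bar A^2=(\tr\bar A)\,\bar A - I$ with $\tr\bar A\in\oo$, so $L=\oo^2+\bar A\cdot\oo^2$ is a finitely generated $\bar A$-stable $\oo$-submodule of $K^2$ containing $\oo^2$, hence a stable lattice. With that patch your argument is complete.
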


We recall the following well-known classification of finite subgroups of $\pslk$; we include a proof for completeness.

\begin{prop}\label{finite}
Let $G$ be a non-trivial finite subgroup of $\pslk$, where either $K=\qp$ or $G$ contains no elements of order $p$. Precisely one of the following holds:
\begin{itemize}
\item $G \cong C_n$, where either $q \equiv \pm 1 \mod 2n$, or $q$ is even and $q \equiv \pm 1 \mod n$, or $K=\qp$ and $n=p\in\{2,3\}$;
\item $G \cong D_n$, where either $q \equiv \pm 1 \mod 2n$, or $K=\qq_2$ and $n=3$;
\item $G \cong A_4$ and either $p>3$ or $K=\mathbb{Q}_3$;
\item $G \cong S_4$ and $q \equiv \pm 1 \mod 8$;
\item $G \cong A_5$ and $q \equiv \pm 1 \mod 10$.
\end{itemize}
\end{prop}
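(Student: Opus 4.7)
The plan is to reduce to Dickson's classification of finite subgroups of $\psl$ over a finite field, via the reduction map attached to a vertex stabiliser in the Bruhat--Tits tree $T_q$.

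First, since $G$ is finite it fixes a vertex of $T_q$ (the standard fixed-point theorem for finite groups acting without inversions on a tree), so after conjugating by a suitable element of $\glk$ we may assume $G\subseteq \psl(\oo)$, and the reduction map $\rho\colon \psl(\oo)\to \psl(\fq)$ is well-defined with pro-$p$ kernel.

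Suppose first that $G$ contains no element of order $p$, so $|G|$ is coprime to $p$. Then $G\cap \ker\rho=\{e\}$, hence $G$ embeds into $\psl(\fq)$, and by Dickson's classification such a subgroup is cyclic, dihedral, or one of $A_4, S_4, A_5$. The congruence conditions on $q$ follow from the two conjugacy classes of maximal tori of $\mathrm{SL}_2(\fq)$: their quotients modulo $\{\pm I\}$ are cyclic of orders $(q\mp 1)/\gcd(2,q-1)$, yielding $q\equiv \pm 1\pmod{2n}$ for $q$ odd (respectively $q\equiv \pm 1\pmod n$ for $q$ even) for both $C_n$ and $D_n$. For the exceptional groups, an order-$4$ element of $\psl(\fq)$ has eigenvalues $\zeta_8^{\pm 1}$ with trace $\pm\sqrt 2$, so $S_4\le \psl(\fq)$ iff $2$ is a square in $\fq$, equivalently $q\equiv\pm 1\pmod 8$; a parallel argument with $\sqrt 5$ yields $q\equiv\pm 1\pmod{10}$ for $A_5$; and $A_4\le \psl(\fq)$ whenever $p>3$ (the only obstruction being the $2$- and $3$-torsion in $|A_4|=12$). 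Conversely, each listed isomorphism class is realised inside $\psl(\oo)\subseteq \pslk$ by Hensel-lifting a primitive root of unity from $\mathbb F_{q^e}$ ($e\in\{1,2\}$) into a suitable torus for the cyclic and dihedral cases, and by verifying the relevant triangle group presentation via Hensel's lemma applied to the defining trace equations for the Platonic groups.

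It remains to treat $K=\qp$ with $G$ containing an element of order $p$. Any such element lifts to $A\in\slk$ with $A^p=\pm I$ and $A\ne\pm I$, whose eigenvalues are primitive $p$-th or $2p$-th roots of unity lying in $\qp$ or a quadratic extension, so $[\qp(\zeta_p):\qp]=p-1\le 2$, forcing $p\in\{2,3\}$. A short trace/determinant calculation shows $\ker\rho$ is still torsion-free in this setting, so $G$ embeds into $\psl(\fq)$. For $p=2$ we have $\psl(\mathbb F_2)\cong S_3$, whose subgroups containing $2$-torsion are $C_2$ and $D_3$; for $p=3$ we have $\psl(\mathbb F_3)\cong A_4$, whose subgroups containing $3$-torsion are $C_3$ and $A_4$. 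Each of these lifts explicitly to $\psl(\qp)$: for example $\begin{pmatrix}1 & 1\\ -2 & -1\end{pmatrix}$ squares to $-I$ and generates $C_2\le \psl(\mathbb Q_2)$, and the other three cases again follow by Hensel applied to the triangle group relations. The main obstacle will be establishing the congruence conditions for $S_4$ and $A_5$ and running the cyclotomic degree argument that restricts $p$-torsion in $\psl(\qp)$ to $p\le 3$; the remainder is bookkeeping from the tree action and Dickson's theorem.
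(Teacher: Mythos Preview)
Your approach is correct and follows the same strategy as the paper: conjugate the finite group into a vertex stabiliser, use that the reduction map modulo $\pi$ has pro-$p$ kernel to embed $G$ into $\psl(\fq)$, and then invoke Dickson's classification together with the identification $\psl(\mathbb{F}_2)\cong D_3$, $\psl(\mathbb{F}_3)\cong A_4$ for the residual $p\in\{2,3\}$ cases. The paper is terser, citing Lubotzky for the $p$-torsion analysis and the torsion-freeness of the congruence kernel and citing Dickson for the congruence conditions, whereas you supply the cyclotomic-degree bound and the torus arguments directly; one small slip is that your even-$q$ clause should not be applied to $D_n$, since a dihedral group contains involutions and hence $p$-torsion when $p=2$, which is why the statement lists only the odd-$q$ congruence (and the exceptional $K=\mathbb{Q}_2$, $n=3$ case) for the dihedral groups.
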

\begin{proof}
Let $\bar{G}$ be a pre-image of $G$ in $\slk$. Since $\bar{G}$ is finite, it is conjugate into ${\rm SL_2}(\mathcal{O}_K)$, and hence the kernel of the reduction map $\bar{G} \to \slq$ is a pro-$p$ group \cite[p. 964]{L}. 

If $K \notin \{\mathbb{Q}_2,\mathbb{Q}_3\}$, then $\bar{G}$ does not contain elements of order $p$, unless $p=2$ and $-I\in  \bar{G}$ \cite[p. 972]{L}.
Since $-I$ is the unique involution in any special linear group, it follows that $G$ is isomorphic to a finite subgroup of ${\rm PSL_2}(\mathbb{F}_q)$ and the classification follows from \cite[Section 260]{Dickson}. 

On the other hand, if $K=\qp$ for $p \in \{2,3\}$, then the proof of \cite[Theorem 3.6]{L} shows that $G$ is isomorphic to a finite subgroup of ${\rm PSL_2}(\mathbb{F}_p)$. The result follows since ${\rm PSL_2}(\mathbb{F}_2)\cong D_3$ and ${\rm PSL_2}(\mathbb{F}_3)\cong A_4$.
\end{proof}

We also classify the fixed point sets of certain finite order elements of $\pslk$. In characteristic zero, this also follows from \cite[Lemma 4.1]{LW}. 

\begin{prop}\label{ordern}
Let $A \in \pslk$ be a non-trivial element of finite order $n$. If $p \nmid n$ or $K = \qp$, then precisely one of the following holds:
\begin{itemize}
\item $\fix(A)$ consists of two adjacent vertices, $K=\qp$ and $n=p\in\{2,3\}$;
\item $\fix(A)$ is a single vertex, and either $q \equiv -1 \mod 2n$, or $q$ is even and $q \equiv -1 \mod n$;
\item $\fix(A)$ is a bi-infinite ray, and either $q \equiv 1 \mod 2n$, or $q$ is even and $q \equiv 1 \mod n$.
\end{itemize}
\end{prop}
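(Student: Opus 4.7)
The plan is to analyze the reduction of $A$ at a fixed vertex and leverage the correspondence between the link of that vertex and $\mathbb{P}^1(\fq)$. Since $A$ has finite order $n$, Proposition~\ref{TL} gives $l(A) = 0$, so $A$ fixes a vertex of $T_q$; after conjugating in $\pslk$, I may take this to be the standard vertex $v_0$, so that $A \in \psl(\oo)$, the link of $v_0$ is naturally in bijection with $\mathbb{P}^1(\fq)$, and the stabiliser $\psl(\oo)$ acts on this link via the reduction map $\psl(\oo) \to \psl(\fq)$. Letting $\tilde A$ denote the image of $A$, the fixed neighbours of $v_0$ correspond precisely to the fixed points of $\tilde A$ on $\mathbb{P}^1(\fq)$.

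Next, I would establish that $\tilde A$ is non-trivial. When $p \nmid n$ this is immediate because the reduction kernel is a pro-$p$ group, so $\tilde A$ still has order $n$. In the remaining case $K = \qp$ with $n = p \in \{2, 3\}$, I would lift $A$ to $\bar A \in {\rm SL_2}(\zp)$ with $\bar A^p = \pm I$; assuming for contradiction that $\bar A = \epsilon I + pM$ for some sign $\epsilon \in \{\pm 1\}$ and some $M \in M_2(\zp)$, expanding $\bar A^p$ via the binomial theorem and comparing $p$-adic valuations of both sides yields a contradiction, in the spirit of the proof of Proposition~\ref{finite}.

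With $\tilde A$ non-trivial, I would enumerate its fixed points on $\mathbb{P}^1(\fq)$. If $p \nmid n$, then $\tilde A$ is semisimple (not unipotent) and lies in either a split torus of $\psl(\fq)$ of order $(q-1)/\gcd(q-1,2)$ or a non-split torus of order $(q+1)/\gcd(q+1,2)$; the former has two fixed points on $\mathbb{P}^1(\fq)$ and the latter has none. Requiring such a torus to contain an element of order exactly $n$, being careful about lifting from $\psl$ to ${\rm SL_2}$, yields $q \equiv 1 \pmod{2n}$ (or $q \equiv 1 \pmod n$ for $q$ even) in the split case and the symmetric condition with $-1$ in the non-split case. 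If instead $K = \qp$ and $n = p$, then $\tilde A$ has order $p$ in $\psl(\mathbb{F}_p)$, hence is the image of a unipotent matrix, fixing exactly one point of $\mathbb{P}^1(\mathbb{F}_p)$.

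The final step is to propagate. Whether the eigenvalues of $A$ lie in $K$, in an unramified quadratic extension, or only in a ramified quadratic extension is an intrinsic property of $A$, so the type of the reduction computed at any vertex of $\fix(A)$ is the same. Hence each vertex of $\fix(A)$ has the same number $k \in \{0, 1, 2\}$ of neighbours in $\fix(A)$: $k = 0$ gives $\fix(A) = \{v_0\}$; $k = 1$ gives $\fix(A) = \{v_0, w\}$ for a unique neighbour $w$; and $k = 2$ forces $\fix(A)$ to be a bi-infinite line, matching the three cases of the proposition. I expect the main obstacle to be the $K = \qp$, $n = p$ case, where ruling out $\tilde A = 1$ requires a direct $p$-adic computation, and identifying $\fix(A)$ with an edge rather than a larger subtree rests on the unipotent reduction fixing exactly one neighbour at each of the two fixed vertices.
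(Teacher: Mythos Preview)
Your proposal is correct but follows a genuinely different route from the paper. The paper's proof invokes \cite[Proposition~11]{BC}, which states that the number of vertices in $\fix(A)$ at distance $k$ from a fixed vertex $v$ equals the number of roots of the characteristic polynomial of a lift $\bar A \in {\rm SL}_2(\mathcal{O}_K)$ over $\mathcal{O}_K/\pi^k\mathcal{O}_K$. With this in hand, the paper simply writes down the characteristic polynomial in each case (e.g.\ $\lambda^2+1$ when $K=\mathbb{Q}_2$ and $n=2$) and counts its roots in $\mathbb{F}_{q^k}$ for all $k$ at once, using Hensel's Lemma and the known location of the relevant roots of unity; this immediately yields the global shape of $\fix(A)$ without any separate propagation step.

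Your approach is local-to-global: you analyse the link of a single fixed vertex via the reduction $\psl(\mathcal{O}_K)\to\psl(\mathbb{F}_q)$ acting on $\mathbb{P}^1(\mathbb{F}_q)$, classify $\tilde A$ as split, non-split, or unipotent, and then argue that since the characteristic polynomial of $\bar A$ (hence its reduction) is conjugation-invariant, the same count of fixed neighbours holds at every vertex of $\fix(A)$, forcing $\fix(A)$ to be a point, an edge, or a line. This is self-contained and avoids the external citation, at the cost of the extra propagation argument and the explicit $p$-adic computation ruling out $\bar A \equiv \pm I \pmod p$ in the $K=\mathbb{Q}_p$, $n=p$ case (which the paper sidesteps entirely by directly counting roots of $\lambda^2+1$ and $\lambda^2+\lambda+1$ modulo powers of $p$). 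Both arguments ultimately rest on the same dichotomy for the eigenvalues of $\bar A$, so the difference is one of packaging rather than of underlying mathematics.
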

\begin{proof}
Let $\bar{A}$ be a representative of $A$ in $\mathrm{SL}_2(K)$ with minimal order (hence $\bar{A}$ has order $n$ if $n$ is odd, and order $2n$ if $n$ is even).
Replacing $\bar{A}$ by an appropriate conjugate, if necessary, we may assume that $\bar{A} \in {\rm SL_2}(\oo)$, so $A$ fixes the vertex $v$ of $T_q$ corresponding to the standard lattice $\oo^2$. By \cite[Proposition 11]{BC}, the number of vertices in $\fix(A)$ at distance $k$ from $v$ is precisely the number of roots of the characteristic polynomial of $\bar{A}$ over $\oo/\pi^k\oo\cong \mathbb{Z}/q^k \mathbb{Z}$. 

First suppose that $p\mid n$. Thus $K=\qp$ and, by \Cref{finite}, $n=p \in \{2,3\}$. If $p=2$, then $\bar{A}$ has order four and its characteristic polynomial is $\lambda^2+1$. This polynomial has one repeated root in $\mathbb{Z}/2 \mathbb{Z}$ and no roots in $\mathbb{Z}/2^k \mathbb{Z}$ for $k>1$, hence $\fix(A)=\{v,w\}$ for some vertex $w$ adjacent to $v$. If $p=3$, then $\bar{A}$ has order three, and its characteristic polynomial is $\lambda^2+\lambda +1$. This polynomial has one repeated root in $\mathbb{Z}/3 \mathbb{Z}$ and no roots in $\mathbb{Z}/3^k \mathbb{Z}$ for $k>1$. Hence $\fix(A)$ also consists of two adjacent vertices in this case.

Now suppose that $p \nmid n$. By \Cref{finite}, either $2n \mid q \pm 1$, or $q$ is even and $n \mid q \pm 1$. Observe from Hensel's Lemma that, for $m$ coprime to $p$, $K$ contains an $m$-th root of unity if and only if $m \mid q-1$. If $2n \mid q-1$ (respectively $q$ is even and $n \mid q - 1$), then it follows that $\bar{A}$ is diagonalisable over $K$, so for every positive integer $k$ the characteristic polynomial of $\bar{A}$ has exactly two roots in $\mathbb{Z}/q^k \mathbb{Z}$. Hence $\fix(A)$ is a bi-infinite ray through $v$.
On the other hand, if $2n \mid q+1$ (respectively $q$ is even and $n \mid q +1$), then for every positive integer $k$ the characteristic polynomial of $\bar{A}$ has no roots in $\mathbb{Z}/q^k \mathbb{Z}$ and $\fix(A)=\{v\}$.
\end{proof}

We deduce that finite order elements of $\pslk$ with order coprime to $p$, and finite order elements of $\pslqp$, have the same fixed point set as their non-trivial powers.

\begin{cor}\label{fixeq}
If $A \in \pslk$ is a non-trivial element of finite order $n$, and either $p \nmid n$ or $K = \qp$, then $\fix(A)=\fix(A^i)$ for every positive integer $i \not\equiv 0 \mod n$. \end{cor}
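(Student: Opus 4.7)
The plan is to leverage Proposition \ref{ordern} applied both to $A$ and to $A^i$, together with the trivial inclusion $\fix(A)\subseteq\fix(A^i)$, and to upgrade this inclusion to equality by showing that both fixed point sets are of the same geometric type.

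First I would set $m=n/\gcd(n,i)$, which is the order of $A^i$ in $\pslk$. Since $i\not\equiv 0\pmod n$ one has $m>1$, and since $m\mid n$ the hypothesis of Proposition \ref{ordern} (either $p\nmid m$ or $K=\qp$) passes from $A$ to $A^i$. I would then split on the three possibilities for $\fix(A)$. In the two-adjacent-vertices case one has $K=\qp$ and $n=p\in\{2,3\}$ prime, so $\gcd(n,i)=1$, hence $\langle A^i\rangle=\langle A\rangle$ and $\fix(A^i)=\fix(A)$ with no further work.

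The remaining two cases are handled by showing that $\fix(A^i)$ has the same type as $\fix(A)$. If $\fix(A)$ is a single vertex, I would rule out both of the other possibilities for $\fix(A^i)$. For the bi-infinite ray possibility, combining the congruences supplied by Proposition \ref{ordern} for $A$ (e.g.\ $q\equiv -1\pmod{2n}$) and for $A^i$ (e.g.\ $q\equiv 1\pmod{2m}$), and using $m\mid n$, collapses to $2m\mid 2$, i.e.\ $m=1$, a contradiction; the mixed odd/even $q$ subcases collapse analogously. For the two-adjacent-vertices possibility for $\fix(A^i)$, one would need $K=\qp$ and $m=p\in\{2,3\}$; then inspecting the finite cyclic subgroups allowed by Proposition \ref{finite} over $\mathbb{Q}_2$ and $\mathbb{Q}_3$ forces $n=p=m$, which places us in the first case of Proposition \ref{ordern} and contradicts the assumption that $\fix(A)$ is a single vertex. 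Hence $\fix(A^i)$ is also a single vertex, so the inclusion is an equality. If $\fix(A)$ is a bi-infinite ray, an entirely analogous congruence analysis rules out the other two types for $\fix(A^i)$; thus $\fix(A^i)$ is also a bi-infinite geodesic, and since one bi-infinite geodesic in a tree cannot properly contain another, $\fix(A)=\fix(A^i)$.

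The main obstacle is the bookkeeping in the single-vertex and bi-infinite-ray cases: one must systematically manage combinations of the odd-$q$ and even-$q$ conditions from Proposition \ref{ordern} for both $A$ and $A^i$, and carefully handle the degenerate situations when $K=\mathbb{Q}_2$ or $\mathbb{Q}_3$ and $p$ could divide $n$, where Proposition \ref{finite} is needed to rule out the existence of a proper power of $A$ with a strictly larger fixed point set.
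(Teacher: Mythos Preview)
Your proposal is correct and follows essentially the same route as the paper. The paper organises the argument slightly differently---splitting first on whether $p\mid n$ (equivalently, whether $\fix(A)$ is two adjacent vertices) and then, in the $p\nmid n$ case, observing directly that $m\mid n$ forces $q$ to lie in the same congruence class modulo $2m$ (or $m$, for even $q$) as modulo $2n$ (or $n$)---whereas you split on the geometric type of $\fix(A)$ and argue each exclusion by contradiction; but the content is the same. One small simplification: once you are in the single-vertex or bi-infinite-ray case for $\fix(A)$, Proposition~\ref{ordern} already gives $p\nmid n$, hence $p\nmid m$, so the two-adjacent-vertices option for $\fix(A^i)$ is ruled out immediately without needing Proposition~\ref{finite}.
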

\begin{proof}
Let $i$ be a positive integer not divisible by $n$. If $p \mid n$, then $K=\qp$ and $n=p\in \{2,3\}$ by \Cref{finite}. The result follows immediately since $A$ and $A^{-1}$ fix precisely the same points. 

Hence suppose that $p \nmid n$ and let $m$ denote the order of $A^i$. Since $m$ is a divisor of $n$, every odd $q$ lies in the same congruence class modulo $2m$ as it does modulo $2n$, and every even $q$ lies in the same congruence class modulo $m$ as it does modulo $n$.
It follows from \Cref{ordern} that $\fix(A)$ and $\fix(A^i)$ are either both 
a single vertex or both a bi-infinite ray. Since $\fix(A) \subseteq \fix(A^i)$, the two fixed point sets must coincide.
\end{proof}

We will frequently use the following special case of \Cref{fixeq}.
\begin{cor}\label{involution}
Let $A \in \pslk$ be a non-trivial element of finite order $n$, where either $p \nmid n$ or $K = \qp$. If $\fix(A)\neq \fix(A^2)$, then $A$ is an involution.
\end{cor}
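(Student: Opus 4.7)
The statement is an immediate specialisation of Corollary \ref{fixeq} with $i=2$, so the plan is simply to verify that the hypotheses of that corollary are met under the assumption that $A$ is not an involution, and then argue by contrapositive.

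First I observe that, since $A$ is non-trivial, its order $n$ satisfies $n \geq 2$. I then suppose for contradiction that $A$ is not an involution, i.e.\ $n \neq 2$, so that $n \geq 3$. In this case the integer $i = 2$ is a positive integer with $0 < i < n$, hence $i \not\equiv 0 \pmod n$. The standing hypothesis $p \nmid n$ or $K = \qp$ is inherited verbatim from the statement of Corollary \ref{involution}, so Corollary \ref{fixeq} applies with this choice of $i$ and yields $\fix(A) = \fix(A^2)$. This contradicts the hypothesis that $\fix(A) \neq \fix(A^2)$, forcing $n = 2$ and so $A$ an involution.

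There is no real obstacle in the argument; the only thing to check is that $i = 2$ lies in the range of indices to which Corollary \ref{fixeq} applies, which is immediate once $n \geq 3$ has been deduced from the assumption that $A$ is non-trivial and not an involution. It is worth noting why the case $n = 2$ is genuinely excluded from the conclusion of Corollary \ref{fixeq}: there, $A^2$ is the identity and $\fix(A^2)$ is the whole tree $T_q$, whereas $\fix(A)$ can be a proper subset (for example, a single vertex or a bi-infinite ray, as classified in Proposition \ref{ordern}). This is precisely the behaviour the corollary is designed to detect.
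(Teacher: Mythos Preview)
Your proof is correct and matches the paper's approach: the paper simply presents this as a special case of Corollary~\ref{fixeq} without further argument, and your contrapositive with $i=2$ is exactly the intended reasoning.
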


We also make the following observations. 

\begin{lem}\label{reflect}
Let $A,B \in \pslk$ be such that $B$ is hyperbolic and $AB$ has finite order. Suppose that either $K=\qp$, or the order of $AB$ is coprime to $p$. If $A$ is an involution which fixes a vertex $y \in \ax(B)$, then $A$ is a reflection in $\ax(B)$ about $y$ if and only if $AB$ is an involution.
\end{lem}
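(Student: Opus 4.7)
The plan is to prove both directions by a short algebraic manipulation that hinges on the identity $ABA^{-1}=B^{-1}$, which is equivalent to $(AB)^2=1$ since $A^2=1$.

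For the forward direction, I would suppose that $A$ is a reflection in $\ax(B)$ about $y$, so $A$ maps $\ax(B)$ to itself while reversing its orientation and fixing $y$. I would then observe that $ABA^{-1}$ is hyperbolic with axis $A(\ax(B))=\ax(B)$ and translation length $l(B)$, but translating $\ax(B)$ in the direction opposite to $B$; hence $ABA^{-1}=B^{-1}$. Using $A=A^{-1}$, this gives $(AB)^2=ABAB=(ABA)B=B^{-1}B=1$. The element $AB$ cannot be trivial, since otherwise $A=B^{-1}$ would be hyperbolic, contradicting that $A$ is an involution. Therefore $AB$ is an involution.

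For the converse, I would suppose that $AB$ is an involution, so $(AB)^2=1$. Then $ABAB=1$, and using $A^{-1}=A$ I deduce $ABA^{-1}=B^{-1}$. This forces $A(\ax(B))=\ax(ABA^{-1})=\ax(B^{-1})=\ax(B)$, so $A$ preserves $\ax(B)$ setwise. Moreover, since $ABA^{-1}=B^{-1}$ translates $\ax(B)$ in the direction opposite to $B$, the isometry $A$ must reverse the orientation of $\ax(B)$. An orientation-reversing isometry of $\ax(B)$ that fixes $y$ is uniquely the reflection of $\ax(B)$ about $y$, so $A$ is a reflection in $\ax(B)$ about $y$.

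I do not expect any serious obstacle: the whole argument reduces to the algebraic equivalence $(AB)^2=1 \iff ABA^{-1}=B^{-1}$ (using $A^2=1$), combined with the standard facts that conjugation sends axes to axes and preserves translation lengths. The hypothesis that $K=\qp$ or the order of $AB$ is coprime to $p$ does not appear to be needed for this particular lemma, and is presumably included for uniformity with the other finite-order results in this section (such as Corollaries \ref{fixeq} and \ref{involution}) that will be invoked alongside it.
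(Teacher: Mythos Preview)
Your converse direction is correct and essentially the paper's argument rephrased: from $(AB)^2=1$ and $A^2=1$ you obtain $ABA^{-1}=B^{-1}$, hence $A$ preserves $\ax(B)$, reverses its orientation, and fixes $y$, so $A|_{\ax(B)}$ is the reflection about $y$. The paper writes this out pointwise as $A\cdot B^ky=B^{-k}y$, but it is the same content.

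The forward direction, however, contains a genuine gap. From the fact that $ABA^{-1}$ and $B^{-1}$ share the same axis, translation length, and direction of translation, you conclude $ABA^{-1}=B^{-1}$. This inference is not valid in $\pslk$: the pointwise stabiliser of an apartment is the compact part of a maximal torus, which is nontrivial (for instance $\mathrm{diag}(\zeta,\zeta^{-1})$, with $\zeta\in\oo^\times$ a root of unity, fixes the standard apartment vertex by vertex). So two hyperbolic elements acting identically on a common axis need not coincide as elements of $\pslk$. What your reasoning \emph{does} give is that $(AB)^2=ABA^{-1}\cdot B$ fixes $\ax(B)$ pointwise, and in particular fixes $y$; meanwhile $AB$ itself does not fix $y$, since $(AB)y=A(By)=B^{-1}y\neq y$. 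This is exactly where the paper invokes Corollary~\ref{involution}: since $AB$ has finite order and either $K=\qp$ or that order is coprime to $p$, the strict inclusion $\fix(AB)\subsetneq\fix((AB)^2)$ forces $AB$ to be an involution. So the hypothesis you set aside as superfluous is precisely what closes the gap in your forward argument.

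(As an aside: one can avoid Corollary~\ref{involution} here by a direct matrix computation. After conjugating so that $\ax(B)$ is the standard apartment and $y$ the standard lattice, an involution in $\pslk$ swapping the two ends of the apartment must be anti-diagonal, and then $ABA^{-1}=B^{-1}$ follows by multiplying out. But that is a different argument from the one you wrote.)
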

\begin{proof}
If $A$ is a reflection in $\ax(B)$ about $y$, then $(AB)^2$ fixes $y$ and $AB$ does not, so $AB$ is an involution by \Cref{involution}. On the other hand, if $AB$ is an involution, then $A\cdot By=B^{-1}Ay=B^{-1}y$ and $A\cdot B^{-1}y=A(ABA)y=By$. By induction, $A\cdot B^ky=B^{-k}y$ for all $k \in \mathbb{Z}$. 
\end{proof}

\begin{lem}\label{commute}
Let $A\in \pslk$ be a non-trivial element of finite order such that either $K=\qp$, or the order of $A$ is coprime to $p$. If $B \in \pslk$ is hyperbolic (respectively elliptic with $\fix(B)$ a bi-infinite ray), then $A$ and $B$ commute if and only if $\fix(A)=\ax(B)$ (respectively $\fix(A)=\fix(B)$). 
\end{lem}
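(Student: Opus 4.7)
My plan is to prove both implications by reducing to matrix computations, after conjugating in $\pslk$ so that the relevant bi-infinite geodesic $L$ (namely $\ax(B)$ in the hyperbolic case, or $\fix(B)$ in the elliptic case) has ends $0$ and $\infty$ in $\partial T_q$.

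For the reverse direction, suppose $\fix(A)=L$. Then $A$ lifts to a diagonal matrix in $\slk$, since fixing $L$ pointwise amounts to fixing the two ends of $L$ as eigendirections. Similarly $B$ lifts to a diagonal matrix, either because it translates along $L$ (hyperbolic case) or because it fixes $L$ pointwise (elliptic case). Diagonal matrices commute, so $A$ and $B$ commute in $\pslk$.

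For the forward direction, suppose $A$ and $B$ commute. Then $A \cdot L = L$, since $\ax(ABA^{-1})=\ax(B)$ and $\fix(ABA^{-1})=\fix(B)$, so $A$ preserves $L$ setwise. As $A$ has finite order and $\pslk$ acts on $T_q$ without inversions, the restriction of $A$ to $L$ is either the identity or a reflection about a vertex. If $A$ acts as the identity on $L$, then $L \subseteq \fix(A)$, and Proposition \ref{ordern} forces $\fix(A)=L$, since $\fix(A)$ is at most a bi-infinite ray. If $A$ acts as a reflection on $L$, then $A^2$ acts trivially on $L$; provided $A$ is not an involution, Corollary \ref{fixeq} gives $\fix(A)=\fix(A^2) \supseteq L$ and we conclude as before.

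The main obstacle is the involution-reflection case, where Corollary \ref{fixeq} is vacuous. In the hyperbolic case this is handled by an eigenvalue argument: for $A$ to commute with $B$ in $\pslk$ while reflecting $\ax(B)$, the lifts must satisfy $ABA^{-1}=-B$ in $\slk$, which forces $\tr(B)=0$ and contradicts $B$ being hyperbolic. In the elliptic case I would pass to explicit matrix forms with the ends of $L$ placed at $0$ and $\infty$, so that $A$ becomes antidiagonal and $B$ diagonal, and then use the eigenvalue criteria of Proposition \ref{ordern} together with the congruence hypotheses on the order of $A$ and the residue characteristic to derive the identification $\fix(A)=L$.
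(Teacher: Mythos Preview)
The paper's proof is a two-line matrix computation: conjugate so that $B$ is represented by a diagonal matrix $\mathrm{diag}(\lambda,\lambda^{-1})$, and observe that a general matrix commutes with this in $\slk$ iff its off-diagonal entries vanish, i.e., iff $A$ is also diagonal; then $A$ and $B$ fix the same two boundary points and \Cref{ordern} gives $\fix(A)=L$. Your geometric route via the induced action of $A$ on $L$ reaches the same place in the hyperbolic case, though the reflection subcase is simpler than you make it: commuting in $\pslk$ means $ABA^{-1}=B$ as isometries of $T_q$, so $A$ preserves the translation direction on $\ax(B)$, and a finite-order orientation-preserving isometry of a line is the identity. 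There is no need to lift and argue about $ABA^{-1}=-B$.

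The genuine gap is in your elliptic paragraph. Your plan to ``derive the identification $\fix(A)=L$'' in the involution--reflection case cannot succeed, because the conclusion is false there. Commuting in $\pslk$ only gives $\bar A\bar B=\pm\bar B\bar A$ for lifts in $\slk$; the minus sign forces $\tr(\bar B)=0$, which is impossible for hyperbolic $B$ but allowed when $B$ is an elliptic involution with bi-infinite fixed ray (so $q\equiv 1\bmod 4$). Concretely, with $\bar B=\mathrm{diag}(i,-i)$ and $\bar A$ the antidiagonal matrix in $\slk$ with off-diagonal entries $1$ and $-1$, the elements $A,B$ commute in $\pslk$ and both have bi-infinite fixed rays, yet $\fix(B)$ has ends $0,\infty$ while $\fix(A)$ has ends $[1:\pm i]$, so $\fix(A)\ne\fix(B)$. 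Thus the elliptic clause of the lemma fails as stated for anti-commuting involutions; the paper's own proof shares this oversight (it only checks commutation in $\slk$), so no amount of ``eigenvalue criteria'' will close the gap you correctly sensed was the main obstacle.
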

\begin{proof}
Without loss of generality, we may assume that $A$ and $B$ are respectively represented by the following matrices in $\slk$:
$$\left[ \begin{array}{cc}
a & b \\ 
c & d \end{array} \right] \textup{ and } \left[ \begin{array}{cc}
\lambda & 0 \\
0 &  \lambda^{-1} \end{array} \right].$$
Hence $A$ and $B$ commute if and only if $b(\lambda-\lambda^{-1})=c(\lambda-\lambda^{-1})=0$. Since $B$ is non-trivial, this occurs if and only if $b=c=0$, in which case $A$ and $B$ fix the same two boundary points of $T_q$. The result follows from \Cref{ordern}.
\end{proof}

\begin{prop}\label{fix2}
Let $G$ be a finite subgroup of $\pslk$ which fixes at least two vertices of $T_q$. If $K=\qp$, or $G$ contains no elements of order $p$, then $G$ is either cyclic or isomorphic to the Klein four-group $D_2$.
\end{prop}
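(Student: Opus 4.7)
The plan is to use the convexity of fixed point sets in trees to force $G$ into an edge stabiliser of $T_q$, and then exploit the pro-$p$-by-cyclic structure of such a stabiliser to show that $G$ is cyclic (which in particular yields the claimed dichotomy).

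First I will observe that the fixed point set of any isometry of a tree is convex, so from $G$ fixing at least two vertices it follows that $G$ pointwise fixes the geodesic joining them and in particular some edge $e$ of $T_q$. Since $\pslk$ acts transitively on the edges of $T_q$, I may conjugate so that $e$ is the standard edge, placing $G$ inside $\bar I := I/\{\pm I\}$, where $I$ is the Iwahori subgroup of ${\rm SL_2}(\oo)$. Reduction modulo $\pi$ identifies $I$ as an extension of the Borel $B(\mathbb{F}_q) = T \ltimes U$ by a pro-$p$ group (with $T$ the diagonal torus of order $q-1$ and $U$ the unipotent radical of order $q$). Pulling $U$ back to $\bar I$ produces a normal pro-$p$ subgroup $N \trianglelefteq \bar I$ whose quotient $\bar I/N$ is isomorphic to $T/(T \cap \{\pm I\})$, and is therefore cyclic of order $(q-1)/\gcd(2,q-1)$.

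With this structure in place the argument splits in two. If $G$ contains no element of order $p$, then $G \cap N$ is trivial (as $N$ is pro-$p$), so $G$ injects into the cyclic group $\bar I/N$ and is itself cyclic. Otherwise $K = \qp$ by hypothesis, and Proposition \ref{finite} restricts $p$ to $\{2,3\}$, as these are the only residue characteristics in which $\pslqp$ admits $p$-torsion. In both of those cases $q = p$ and so $(q-1)/\gcd(2,q-1) = 1$; the cyclic quotient $\bar I/N$ is then trivial, meaning $\bar I$ is pro-$p$ and the finite group $G$ is a $p$-group. A second appeal to Proposition \ref{finite} identifies the only finite $p$-subgroups of $\pslqp$ for $p \in \{2,3\}$ as cyclic of order dividing $p$. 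Hence in every case $G$ is cyclic, which certainly gives the claimed dichotomy.

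I expect the main technical nuisance to be bookkeeping the centre $\{\pm I\}$ when passing from $I$ to $\bar I$: for $p$ odd the element $-I$ maps to a non-trivial element of $T$ which must be quotiented out, whereas for $p = 2$ it already lies in the pro-$p$ kernel and no further quotient is needed. Both conventions land on the same cyclic quotient $T/(T \cap \{\pm I\})$, and the key numerical coincidence is that this quotient is trivial precisely when $p \in \{2,3\}$ and $q = p$, i.e.\ in exactly the cases where $G$ can carry $p$-torsion.
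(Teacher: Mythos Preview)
Your argument is correct and in fact proves the sharper statement that $G$ is always cyclic; the $D_2$ alternative in the paper's proposition never actually occurs under these hypotheses. Your route is genuinely different from the paper's. The paper works by exclusion within the list of Proposition~\ref{finite}: using Proposition~\ref{ordern} and Lemma~\ref{commute} it shows that a group fixing two vertices cannot contain a non-abelian dihedral subgroup or a copy of $A_4$, which leaves only cyclic groups and $D_2$, and it stops there without eliminating $D_2$. You instead place $G$ inside an edge stabiliser and read the answer off from the pro-$p$-by-cyclic Iwahori filtration, bypassing any case analysis on the isomorphism type of $G$. Your approach is shorter, more structural, and yields the stronger conclusion; the paper's approach has the compensating virtue of staying entirely within the toolkit already assembled in the section (fixed-point sets and the commuting criterion) rather than importing the filtration of the Iwahori subgroup.
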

\begin{proof}
By \Cref{ordern}, each element of $G$ fixes a bi-infinite ray, unless $K=\qp$ for $p \in \{2,3\}$ and $G$ contains an element of order $p$ which fixes two adjacent vertices (in which case, $G$ fixes precisely those two vertices).

We first show that $G$ does not contain a non-abelian dihedral group. Indeed, suppose for a contradiction that $G$ contains a dihedral group $H$ of order $2m$, where $m>2$. Let $A$ and $B$ be generators of $H$, where $A$ has order 2, $B$ has order $m$ and $ABA^{-1}=B^{-1}$. Note that $A \cdot \fix(B)=\fix(ABA^{-1})=\fix(B)$, and either $\fix(B)$ consists of two adjacent vertices, $K=\qq_3$ and $m=3$, or $\fix(B)$ is a bi-infinite ray. In the former case, $\fix(A)$ is a single vertex by \Cref{ordern}, which is a contradiction. In the latter case, if $x \in \fix(B)$ then $Ax \in \fix(B)$ and, since $A$ fixes at least two vertices of $\fix(B)$, we obtain $x\in \fix(A)$.
Hence $\fix(B)\subseteq \fix(A)$ and thus $\fix(A)=\fix(B)$, so $A$ and $B$ commute by \Cref{commute}, which is also a contradiction.

Since $G$ is one of the groups listed in \Cref{finite}, it must therefore be cyclic, or isomorphic to either $D_2$ or $A_4$. Suppose that $G \cong A_4$ (in particular, $p\neq 2$). Let $A$ and $B$ be generators of $G$, where $A$ has order 3 and $B$ has order 2. Note that $ABA^{-1}=B'$ for some element $B'\in G$ of order 2 which commutes with $B$. Since $p\neq 2$, $\fix(B)$ is a bi-infinite ray and it follows from \Cref{commute} that $A \cdot \fix(B)=\fix(B')=\fix(B)$. As before, we conclude that $\fix(A)=\fix(B)$, which is a contradiction since $A$ and $B$ do not commute. Hence $G$ is either cyclic or isomorphic to $D_2$.
\end{proof}

We now consider certain two-generator subgroups $G$ of $\pslk$ which satisfy case $(6)$ of \Cref{lem}.

\begin{lem}\label{k}
Let $G=\langle A, B \rangle$ be a subgroup of $\pslk$ such that $A$ is elliptic, $B$ is hyperbolic, and $\fix(A) \cap \ax(B)$ is a finite path $P$ of length $\Delta \ge l(B)$. Suppose that $G_0=\langle A, BAB^{-1}, \dots, B^kAB^{-k} \rangle$ is finite, where $k=\lfloor \frac{\Delta}{l(B)} \rfloor$. If either $K =\qp$, or $G_0$ contains no elements of order $p$, then $\Delta=l(B)$ and the following hold:
\begin{itemize}
\item $G_0$ is isomorphic to one of the following groups:
\begin{itemize}
\item $D_{2n+1}$, where $q \equiv 1 \mod 4$ and $q \equiv \pm 1 \mod (4n+2)$;
\item $A_4$, where $q \equiv 1 \mod 6$;
\item $S_4$, where $q \equiv 1 \mod 8$;
\item $A_5$, where $q \equiv 1,19 \mod 30$ or $q \equiv 1 \mod 10$.
\end{itemize}
\item If $G_0$ is isomorphic to $S_4$ or $A_5$, then there exists $g\in G_0$ such that $g, gB$ and $gBA$ are involutions.
\item There is no proper subgroup of $G_0$ which fixes at least two vertices of $T_q$ and properly contains $\langle A \rangle$ or $\langle BAB^{-1} \rangle$.
\end{itemize}
\end{lem}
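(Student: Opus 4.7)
The plan is to proceed in three stages: first prove $\Delta=l(B)$ by ruling out the case $\Delta>l(B)$; then classify $G_0$ using \Cref{finite} together with geometric constraints at the vertex $y=Bx$; and finally verify the last two bullets. Set $A':=BAB^{-1}$. Both $A$ and $A'$ fix the sub-path $P\cap BP$ of $\ax(B)$, which has length $\Delta-l(B)$. Suppose $\Delta>l(B)$. Then the finite subgroup $H:=\langle A,A'\rangle\le G_0$ fixes at least two vertices, so by \Cref{fix2} it is cyclic or isomorphic to $D_2$. In the cyclic case, $A$ and $A'$ have the same order and generate the same cyclic subgroup, so $A'=A^j$ for some $j$ coprime to the order of $A$; then \Cref{fixeq} gives $B\cdot\fix(A)=\fix(A')=\fix(A)$, and since $\Delta\ge l(B)\ge 2$ forces $\fix(A)$ to be a bi-infinite ray, the hyperbolic $B$ must translate along it, forcing $\ax(B)=\fix(A)$. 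Then \Cref{commute} yields $[A,B]=e$ and $P=\ax(B)$ is infinite, a contradiction. In the $D_2$ case, $A$ and $A'$ are commuting involutions; $A$ preserves the bi-infinite ray $\fix(A')$ and fixes a positive-length subpath of it, so $A|_{\fix(A')}$ must be the identity, giving $\fix(A')\subseteq\fix(A)$. By symmetry $\fix(A)=\fix(A')=B\cdot\fix(A)$, and the same contradiction follows. Hence $\Delta=l(B)$, so $k=1$.

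With $k=1$, $G_0=\langle A,A'\rangle$ fixes $y=Bx$, and applying the same dichotomy to any candidate intersection $\fix(A)\cap\fix(A')\supsetneq\{y\}$ shows $\fix(A)\cap\fix(A')=\{y\}$ exactly; consequently, at $y$ the union $\fix(A)\cup\fix(A')$ uses four distinct edges (two from each bi-infinite ray). By \Cref{finite}, $G_0$ is cyclic, $D_n$, $A_4$, $S_4$, or $A_5$; the cyclic and $D_2$ cases are already excluded. For $G_0=D_n$, I argue that $A$ cannot be a rotation (else $A'\in\langle A\rangle$ and the cyclic argument recurs), so $A$ is a reflection, $A'$ is also a reflection, and $AA'$ is a rotation generating the cyclic rotation subgroup, hence of order $n$. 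The even dihedral case $D_{2m}$ with $m\ge 2$ is excluded by analysing the central involution $z=(AA')^m$: since $z$ commutes with $A$ and $A'$, it preserves each of $\fix(A),\fix(A')$ and fixes $y$; a matrix calculation shows that the non-trivial involution of $\pslk$ fixing a given bi-infinite ray pointwise is unique, so since $z\ne A$ (as $z$ is central in $D_{2m}$ while $A$ is a non-central reflection for $m\ge 2$), $z$ must act as the reflection about $y$ on each of $\fix(A),\fix(A')$. Carefully tracking this reflection together with the product structure in $D_{2m}$ then produces the contradiction. The cases $G_0\in\{A_4,S_4,A_5\}$ are treated by parallel arguments: in each, the order of $A$ is forced by group-theoretic constraints, and the $q$-congruences are extracted from \Cref{finite,ordern} by requiring $A$, $A'$ to have bi-infinite ray fixed sets and $AA'$ to be a rotation with the appropriate fixed point structure.

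For the second bullet, when $G_0\cong S_4$ or $A_5$, the desired $g\in G_0$ is an involution that acts as a reflection in $\ax(B)$ about $y$; by \Cref{reflect}, $gB$ is then automatically an involution, and by a direct verification in the finite group $G_0$ (exploiting the abundance of involutions in $S_4$ and $A_5$) one can arrange that $gBA$ is also an involution. Such a $g$ exists in $S_4$ and $A_5$ but not in $D_{2n+1}$ or $A_4$, which is why the bullet is restricted accordingly. For the third bullet, any proper subgroup $H'\le G_0$ properly containing $\langle A\rangle$ (or $\langle A'\rangle$) and fixing at least two vertices of $T_q$ would, by \Cref{fix2}, be cyclic or isomorphic to $D_2$: cyclic contradicts $\langle A\rangle\subsetneq H'$ (as $A$ already has its prescribed order in $G_0$), and $D_2$ forces $A$ to be an involution, which is incompatible with the order of $A$ in each admissible $G_0$. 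The main obstacle is the intricate casework required to rule out even dihedrals and to extract the exact congruence conditions for each of $D_{2n+1},A_4,S_4,A_5$ (and to exhibit $g$ in the $S_4,A_5$ cases): the tree-geometric setup at $y$ combined with \Cref{ordern} and the matrix structure of $\pslk$ does most of the heavy lifting, but the finite-group computations must still be carried out explicitly.
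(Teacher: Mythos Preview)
Your reduction to $\Delta=l(B)$ follows the paper's line closely (the paper just says ``abelian'' and invokes \Cref{commute} directly, while you split into cyclic and $D_2$, but the content is the same). Your even-dihedral exclusion is muddled, however: you conclude that the central involution $z=(AA')^m$ must act as a reflection on $\fix(A)$ and then gesture at ``carefully tracking\dots produces the contradiction''. In fact the contradiction is already in the branch you discard: since $z$ commutes with $A$, \Cref{commute} forces $\fix(z)=\fix(A)$, so $z$ \emph{does} act as the identity on $\fix(A)$, and your uniqueness observation then gives $z=A$, contradicting that $z$ is central while $A$ is not. The paper reaches the same contradiction slightly differently, via $\fix(A)=\fix(z)=\fix(AA')$ (using \Cref{fixeq}) and noting $AA'$ does not fix $x$; either way, no ``reflection branch'' analysis is needed.

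The substantive gap is the second bullet. You assert that when $G_0\cong S_4$ or $A_5$ there is an involution $g\in G_0$ reflecting $\ax(B)$ about $y$, but give no argument. This is not a purely group-theoretic fact about $G_0$: whether $G_0$ contains such a reflection depends on the embedding into $\pslk$, and indeed is exactly what distinguishes cases $(f)$ and $(g)$ of \Cref{thm2}. Your parenthetical claim that such a $g$ ``does not exist'' in $D_{2n+1}$ or $A_4$ is false---for those groups both cases $(f)$ and $(g)$ can occur (see Section~\ref{sec:examples}); the point of the bullet is that for $S_4$ and $A_5$ only case $(g)$ occurs. The paper proves this by explicit trace computation: fixing lifts of $A,B$ in $\slk$ and using the identity $\tr(X)\tr(Y)=\tr(XY)+\tr(XY^{-1})$, one first pins down $\tr(ABA^{-1}B^{-1})$ from the orders of $AA'$ and $A(A')^{-1}$, and then verifies that $g=ABA^{-1}B^{-1}A$ (for $S_4$) and $g=(ABA^{-1}B^{-1})^2A$ (for $A_5$) satisfy $\tr(g)=\tr(gB)=\tr(gBA)=0$. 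Nothing in your proposal substitutes for this computation.

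Your third-bullet argument also slips in the dihedral case: when $G_0\cong D_{2n+1}$ the element $A$ \emph{is} an involution, so ``$D_2$ forces $A$ to be an involution'' is not a contradiction. The correct reason is that $D_{2n+1}$ has no $D_2$ subgroup (its order is $2(2n+1)$, not divisible by $4$); the paper phrases this uniformly by observing that any proper subgroup of $G_0$ properly containing $\langle A\rangle$ or $\langle BAB^{-1}\rangle$ is non-abelian dihedral or $A_4$, which \Cref{fix2} excludes.
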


\begin{proof}
Let $\fix(A) \cap \ax(B)=[x,y]$, where $B$ translates $x$ towards $y$. By \Cref{ordern}, $\fix(A)$ is a bi-infinite ray and $q-1$ is divisible by either twice the order of $A$ (if $q$ is odd) or the order of $A$ (if $q$ is even).

If $\Delta>l(B)$, then the subgroup $\langle A, BAB^{-1} \rangle$ of $G_0$ fixes the path $[Bx,y]$ (which contains at least two vertices). By \Cref{fix2}, this subgroup is abelian. Thus $A$ and $BAB^{-1}$ commute, so $\fix(A) = \fix(BAB^{-1})$ by \Cref{commute}, a contradiction since $A$ fixes $x$ but $BAB^{-1}$ does not. Hence $\Delta=l(B)$ and $G_0=\langle A, BAB^{-1} \rangle$ must be one of the non-abelian groups listed in \Cref{finite}. We consider possible generating pairs for each such group, where both generators have the same order. 

\underline{$G_0$ is dihedral}: If $G_0$ is dihedral of order $2m$, then $A$ has order 2 and $ABAB^{-1}$ has order $m$. Thus $q \equiv 1 \mod 4$ and, by \Cref{finite}, $q \equiv \pm 1 \mod 2m$. Moreover, if $m$ is even, then $\fix(ABAB^{-1})$ is a bi-infinite ray by \Cref{ordern}. Since $A$ commutes with $(ABAB^{-1})^{\frac{m}{2}}$, \Cref{fixeq} and \Cref{commute} show that $\fix(A)=\fix((ABAB^{-1})^{\frac{m}{2}})=\fix(ABAB^{-1})$. This is again a contradiction (since $A$ fixes $x$ but $ABAB^{-1}$ does not), so $m$ is odd and the result follows.

For the remainder of the proof, we will fix a choice of representatives for $A$ and $B$ in $\slk$, and (by slight abuse of notation) define the trace of elements of $G$ using these representatives. We will include appropriate bracketing to indicate the use of the well-known trace identity for elements $X,Y \in \slk$:
\begin{align*}
\tr(X)\tr(Y)=\tr(XY)+\tr(XY^{-1}).
\end{align*}
In particular, we will frequently use the identity
\begin{equation}\label{tr1}
\tr(ABAB^{-1})=\tr^2(A)-\tr(ABA^{-1}B^{-1}).
\end{equation}

\underline{$G_0\cong A_4$}: The only pairs of elements which have the same order and generate $A_4$ are elements of order 3. Hence $A$ has order 3, and $q \equiv 1 \mod 6$ since \Cref{finite} shows that $q$ is odd. 

\underline{$G_0 \cong S_4$}: In this case, it can be easily verified that $A$ has order 4, and hence $q \equiv 1 \mod 8$. 
Moreover, it can also be verified that $ABAB^{-1}$ and $ABA^{-1}B^{-1}$ both have order 3. Since $\tr(A)=\pm \sqrt{2}$, it follows from \Cref{tr1} that $\tr(ABAB^{-1})=\tr(ABA^{-1}B^{-1})=1$. If $g=ABA^{-1}B^{-1}A\in G_0$, then
\begin{align*}
\tr(g)&=\tr(ABA^{-1}B^{-1})\tr(A)-\tr(ABA^{-1}B^{-1}A^{-1})\\
&=\tr(A)-\tr(A)=0, \\
\tr(gB)&=\tr(ABA^{-1}B^{-1})\tr(AB)-\tr(ABA^{-1}B^{-2}A^{-1})\\
&=\tr(AB)-\tr(A^{-1}B^{-1})=0, \textup{ and} \\
\tr(gBA)&=\tr(ABA^{-1}B^{-1})\tr(ABA)-\tr(ABA^{-1}B^{-1}A^{-1}B^{-1}A^{-1})\\
&=\tr(ABA)-\tr(A^{-1}B^{-1}A^{-1})=0.
\end{align*}
Hence $g, gB$ and $gBA$ are involutions.

\underline{$G_0 \cong A_5$}: It can be easily verified that $A$ must have order 3 or 5. If $A$ has order 3, then $q \equiv 1,19 \mod 30$ by \Cref{finite}. It can also be verified that $ABAB^{-1}$ and $ABA^{-1}B^{-1}$ both have order 5. Hence \Cref{tr1} shows that $t=\tr(ABA^{-1}B^{-1})=1-\tr(ABAB^{-1})=\frac{1 \pm \sqrt{5}}{2}$. 

Similarly, if $A$ has order 5, then $q \equiv 1 \mod 10$ and \Cref{tr1} shows that one of $ABAB^{-1}$ and $ABA^{-1}B^{-1}$ has trace $1$, and the other has trace $\frac{1 \pm \sqrt{5}}{2}$. Without loss of generality, we may assume that $t=\tr(ABA^{-1}B^{-1})=\frac{1 \pm \sqrt{5}}{2}$.

In both cases, if $g=(ABA^{-1}B^{-1})^2A\in G_0$, then
\begin{align*}
\tr(g)&=\tr((ABA^{-1}B^{-1})^2)\tr(A)-\tr((ABA^{-1}B^{-1})^2A^{-1})\\
&=\tr(A)(t^2-2)-\tr(A^{-1}(B^{-1}ABA^{-1}))\\
&=\tr(A)(t^2-t-2)+\tr(AB^{-1}A^{-1}BA^{-1})\\
&=\tr(A)(t^2-t-1)=0.
\end{align*}
Similarly, it can be shown that
\begin{align*}
\tr(gB)&=\tr(AB)(t^2-t-1)=0, \textup{ and} \\
\tr(gBA)&=\tr(ABA)(t^2-t-1)=0.
\end{align*}
Thus $g, gB$ and $gBA$ are involutions.

Finally, suppose that $H$ is a proper subgroup of $G_0$ which fixes at least two vertices of $T_q$ and properly contains $\langle A \rangle$ or $\langle BAB^{-1} \rangle$. By considering the isomorphism types of $G_0$ listed above, $H$ is either a non-abelian dihedral group or $A_4$, but this contradicts \Cref{fix2}.
\end{proof}

\begin{lem}\label{case5}
Let $G=\langle A, B \rangle$ be a subgroup of $\pslk$ such that either $K =\qp$, or $G$ contains no elements of order $p$.
Suppose that $A$ is elliptic, $B$ is hyperbolic, and $\fix(A) \cap \ax(B)$ is a path of length $\Delta = l(B)$ with initial vertex $x$ and terminal vertex $y$. If $G_0=\langle A, BAB^{-1}\rangle$ is finite and contains an element which maps $By$ to $x$, then there exists $g\in G_0$ such that $g, gB$ and $gBA$ are involutions.
\end{lem}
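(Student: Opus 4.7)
I would apply Lemma~\ref{k} with $k=1$ to reduce to $G_0 \cong D_{2n+1}$, $A_4$, $S_4$, or $A_5$. The $S_4$ and $A_5$ cases are immediate from the second bullet of Lemma~\ref{k}, which produces exactly the desired $g$, so it suffices to handle $G_0 \cong D_{2n+1}$ or $A_4$.

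For these cases, my strategy is first to locate an involution $g\in G_0$ with $g \cdot By = x$, and then verify that $gB$ and $gBA$ are both involutions. By the third bullet of Lemma~\ref{k} the pointwise stabilizer of $[y, By]$ in $G_0$ equals $\langle BAB^{-1}\rangle$ (the only other possibility, that it equals $G_0$, would make $\fix(A)\cap \ax(B)$ longer than $l(B)$, contradicting the hypothesis). Hence any element of $G_0$ sending $By$ to $x$ lies in the coset $h\langle BAB^{-1}\rangle$, where $h$ is the element supplied by the hypothesis. In $D_{2n+1}$ every reflection is an involution and each coset of $\langle BAB^{-1}\rangle$ contains one, so such a $g$ exists by inspection. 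In $A_4$ one observes that the hypothesis forces $A$ and $BAB^{-1}$ into different $A_4$-conjugacy classes of 3-cycles, after which one of the three involutions of the Klein four-subgroup lies in the required coset.

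Given such an involution $g$, Lemma~\ref{reflect} then yields that $gB$ is an involution, once one checks that $gB$ has finite order; this follows from a trace computation analogous to those carried out in Lemma~\ref{k}, using $\tr(A)=0$ in the dihedral case and $\tr(A)=\pm 1$ in the $A_4$ case together with the identity $\tr(ABAB^{-1}) = \tr^2(A) - \tr(ABA^{-1}B^{-1})$. For $gBA$: expanding $(gBA)^2 = 1$ and using $gBg^{-1}=B^{-1}$ reduces the condition to $gAg^{-1} = (BAB^{-1})^{-1}$, which in the dihedral case says $g$ conjugates the two involutions $BAB^{-1}$ and $A$ to one another, and in the $A_4$ case says $g$ swaps the two conjugacy classes of 3-cycles. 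Both identities hold by the construction of $g$ above.

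The main obstacle lies in the $A_4$ case: one must carefully extract from the geometric hypothesis on $h$ that $A$ and $BAB^{-1}$ lie in distinct $A_4$-conjugacy classes, and verify that the Klein four-involution so chosen simultaneously satisfies the geometric condition $g\cdot By = x$ and the algebraic condition $gAg^{-1} = (BAB^{-1})^{-1}$. This amounts to matching the action of $G_0$ on the link of $y$ with its abstract conjugation action on its own 3-cycles; the $D_{2n+1}$ case, by contrast, is essentially routine once the uniqueness of the involution in $h\langle BAB^{-1}\rangle$ is established.
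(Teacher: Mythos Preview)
Your approach diverges from the paper's in a way that makes the argument harder than it needs to be, and leaves a genuine gap in the $A_4$ case.

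The paper's proof is uniform and does \emph{not} case-split on the isomorphism type of $G_0$ up front. The key observation you are missing is this: for \emph{any} $g\in G_0$ with $g\cdot By=x$ (not yet assumed to be an involution), one checks directly that $gBA^i$ interchanges $x$ and $y$, since $gBA^i\cdot y=gB\cdot y=x$ and $gBA^i\cdot x=gB\cdot x=g\cdot y=y$. Hence $(gBA^i)^2$ fixes $y$ while $gBA^i$ does not, and Corollary~\ref{involution} forces $gBA^i$ to have order $2$. So $gB$ and $gBA$ are involutions \emph{before} one knows anything about $g$ itself. From these two relations one reads off $BAB^{-1}=g^{-1}A^{-1}g$, whence $G_0=\langle A,g\rangle$; only then does the paper locate an involution of the form $A^jg$ (which still sends $By$ to $x$) by a short check against the list in Lemma~\ref{k}.

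By contrast, you try to find an involution $g$ first and then verify $gB$, $gBA$ are involutions. Your reduction of $(gBA)^2=1$ to $gAg^{-1}=(BAB^{-1})^{-1}$ is correct, but in the $A_4$ case the geometry only gives $g\langle BAB^{-1}\rangle g^{-1}=\langle A\rangle$ (via the third bullet of Lemma~\ref{k}), hence $gAg^{-1}=(BAB^{-1})^{\pm 1}$ with an undetermined sign. Your proposal to resolve this sign by analysing the action on the link of $y$ is not carried out, and it is not clear it can be done without essentially reproving the paper's direct observation above; indeed, the fact that $A$ and $BAB^{-1}$ lie in distinct $A_4$-conjugacy classes is exactly the relation $BAB^{-1}=g^{-1}A^{-1}g$, which the paper obtains as a \emph{consequence} of $gB,gBA$ being involutions rather than as a prerequisite. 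Your invocation of Lemma~\ref{reflect} for $gB$ is also more than is needed: the argument that $(gB)^2$ fixes $y$ while $gB$ does not already gives the conclusion, and avoids having to establish separately that $g$ is a reflection in the full sense of that lemma.
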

\begin{proof}
Suppose that $g\in G_0$ is such that $g\cdot By=x$. Observe that any element of the form $gBA^i$ has order 2 by \Cref{involution}, since $(gBA^i)^2$ fixes $y$ but $(gBA^i)$ does not.

In particular, $gB$ and $gBA$ are involutions and hence
$$BAB^{-1}=g^{-1}(gBA)B^{-1}=g^{-1}A^{-1}B^{-1}(g^{-1}B^{-1})=g^{-1}A^{-1}g.$$
It follows that $G_0=\langle A, g \rangle$. 

Since $G_0$ is isomorphic to one of the finite groups listed in \Cref{k}, it is readily verified (using a computational algebra package such as {\sc Magma}) that some element of the form $A^ig$ has order 2. Such an element also maps $By$ to $x$, and hence we may assume that $g$ is also an involution.
\end{proof}

\begin{lem}\label{amalgam}
Let $G=\langle A, B \rangle$ be a subgroup of $\pslk$ such that $A$ is elliptic of order $m$, $B$ is hyperbolic, and $\fix(A) \cap \ax(B)$ is a path $P$ of length $\Delta = l(B)$ with initial vertex $x$ and terminal vertex $y$. Let $G_0=\langle A, BAB^{-1}\rangle$ and suppose that the following two conditions hold:
\begin{itemize}
\item There exists $g \in G_0$ such that $g, gB$ and $gBA$ are involutions;
\item There is no proper subgroup $H$ of $G_0$ which fixes at least two vertices and properly contains $\langle A \rangle$.
\end{itemize}
Then $G=G_0 *_{\langle A \rangle} \langle A, gB\rangle\cong G_0*_{C_m} D_m$ and $G_0={\rm Stab}_G(y)$.
\end{lem}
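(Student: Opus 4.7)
The plan is to apply the Klein--Maskit combination theorem (Theorem A.10 of \cite[VII]{M}) to $G_0$ and $G_1:=\langle A,gB\rangle$ amalgamated over $\langle A\rangle$, following the template of the proofs of cases $(3)$ and $(4)$ of \Cref{lem}.

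First, I would verify that $G_1\cong D_m$. The relations $(gB)^2=(gBA)^2=1$ yield $gBg=B^{-1}$ and $(gB)A(gB)^{-1}=A^{-1}$, so $G_1$ is dihedral of order $2m$ with cyclic index-two subgroup $\langle A\rangle\cong C_m$. Since $B=g^{-1}(gB)$ with $g\in G_0$, we have $G=\langle G_0,G_1\rangle$.

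Next I would locate the relevant fixed vertices on the Bruhat--Tits tree $T_q$. By \Cref{TL}, $l(B)$ is a positive even integer, so the midpoint $z$ of $P=[x,y]$ is a vertex of $T_q$. Since $g$ fixes $y$ and conjugates $B$ to $B^{-1}$, it acts on $\ax(B)$ as the reflection about $y$, and hence $gB$ acts as the reflection of $\ax(B)$ about $z$. Both $A$ (as $z\in[x,y]\subseteq\fix(A)$) and $gB$ fix $z$, so $G_1$ fixes $z$, while $G_0$ fixes $y$ and $\langle A\rangle$ pointwise fixes $[z,y]$.

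For the combination theorem, let $z'$ (respectively $y'$) be the neighbour of $y$ (respectively $z$) on $[z,y]$, and let $X_1$ (respectively $X_2$) be the component of $T_q$ obtained by removing the edge $[y,z']$ (respectively $[z,y']$) containing $y$ (respectively $z$). I would verify that $X_1,X_2$ are disjoint and $\langle A\rangle$-invariant; that $h(X_2)\subseteq X_1$ for each $h\in G_0\setminus\langle A\rangle$ (equivalently, $h(z')\ne z'$); and that $h(X_1)\subseteq X_2$ for each $h\in G_1\setminus\langle A\rangle$ (equivalently, $h(y')\ne y'$). The latter holds because every such $h$ has the form $A^i(gB)$ and acts on $\ax(B)$ as a reflection about $z$, swapping the two neighbours of $z$ on $\ax(B)$. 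For the former, ${\rm Stab}_{G_0}(z')$ is a subgroup of $G_0$ fixing both $y$ and $z'$ and containing $\langle A\rangle$; by the second hypothesis of the lemma it must equal either $\langle A\rangle$ or $G_0$, and the latter is ruled out because $BAB^{-1}\in G_0$ satisfies $\fix(BAB^{-1})\cap\ax(B)=[y,By]$, which does not contain $z'$. Theorem A.10 of \cite[VII]{M} then yields $G=G_0*_{\langle A\rangle}G_1\cong G_0*_{C_m}D_m$, and the resulting normal form for amalgamated products shows ${\rm Stab}_G(y)=G_0$.

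The main obstacle is the careful verification of the ping-pong conditions on $X_1$ and $X_2$; the crucial point is that the second hypothesis of the lemma is exactly what is needed to force ${\rm Stab}_{G_0}(z')=\langle A\rangle$.
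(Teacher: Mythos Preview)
Your proposal is correct and takes essentially the same approach as the paper: both apply the Klein--Maskit combination theorem (Theorem~A.10 of \cite[VII]{M}) to $G_0$ and $\langle A,gB\rangle$ amalgamated over $\langle A\rangle$, verify that $\langle A,gB\rangle\cong D_m$ with $gB$ reflecting $\ax(B)$ about the midpoint of $P$, and invoke the second hypothesis to show that no element of $G_0\setminus\langle A\rangle$ fixes the relevant cut vertex on the $x$-side of $y$. The only difference is cosmetic: the paper chooses the ping-pong half-trees $X_1=\pi_B^{-1}((b_-,x])$ and $X_2=\pi_B^{-1}([y,b_+))$ cut at the endpoints $x,y$ of $P$ (so the hypothesis is applied at $x$), whereas you cut at the midpoint $z$ and at $y$ (applying the hypothesis at the neighbour $z'$ of $y$); both choices work.
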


\begin{proof}
Let $G_1=G_0$, $G_2=\langle A, gB \rangle$ and $J=\langle A \rangle$. Recall that $\pi_B: T_q \to \ax(B)$ denotes the geodesic projection map, and that $b_-$ and $b_+$ denote the ends of $\ax(B)$, with $B$ translating from $b_-$ towards $b_+$. Consider the sets $X_1=\pi_B^{-1}((b_-,x])$ and $X_2=\pi_B^{-1}([y,b_+))$, which are invariant under $J$. Observe that $G_2$ is dihedral of order $2m$ since $gB(A)(gB)^{-1}=(A^{-1}B^{-1}g)gB=A^{-1}$. 
In particular, each element of $G_2\backslash J$ can be written in the form $A^igB$. It can be shown inductively that $gB$ is a reflection in $\ax(B)$ about the midpoint of $[x,y]$, so it follows that $g_2(X_2) \subseteq X_1$ for every $g_2 \in G_2 \backslash J$; see \Cref{amalg}. Moreover, $y \in X_2$ is fixed by $G_1$, so it is not the image of any point of $X_1$ under $G_1$.

\begin{figure}[h!]
\centering
\begin{tikzpicture}
  [scale=0.8,auto=left] 

\node[circle,inner sep=0pt,minimum size=3,fill=black] (1) at (3,0) {};
\node at (2,-0.4) {$B^{-1}y=x$};
\node[circle,inner sep=0pt,minimum size=3,fill=black] (1) at (6,0) {};
\node at (6.2,-0.4) {$y$};

\draw (0,0) to (3,0);

\draw [>->] (0,0) to (9,0);
\node at (-0.5,0) {$b_-$};
\node at (9.5,0) {$b_+$};
\node at (4.5,0.5) {$\ax(B)$};

\draw [>-] (3,3) to (3,0); \node at (3,3.5) {$\fix(A)$};
\draw [->] (6,0) to (6,3);   \node at (6,3.5) {$\fix(A)$};
 
\draw [dotted] (3.2,-2.5) to (3.2,2.5);
\draw [dotted] (5.8,-2.5) to (5.8,2.5);

\node at (1,-2) {$X_1$};
\node at (8,-2) {$X_2$};

\end{tikzpicture} 
\caption{Combination theorem sets for \Cref{amalgam}}\label{amalg}
\end{figure}

Now suppose for a contradiction that there exists $g_1 \in G_1 \backslash J$ and a vertex $z \in X_1$ such that $g_1z \notin X_2$. It follows that $\pi_B(g_1z) \in (b_-,y)$. Since $y$ is fixed  by $G_1$, the non-trivial path $[\pi_B(g_1z),y]\cap [x,y]$ is fixed by $g_1$. Thus $H=\langle A, g_1 \rangle$ is a proper subgroup of $G_0$ that fixes at least two vertices and properly contains $\langle A \rangle$, which contradicts our assumption.
This shows that $g_1(X_1) \subseteq X_2$ for every $g_1 \in G_1 \backslash J$. In the terminology of \cite[VII.A]{M}, $(X_1,X_2)$ is therefore a proper interactive pair for the groups $G_1, G_2$ and $J$, and $G=G_1 *_J G_2 \cong G_0 *_{C_m} D_m$ by Theorem A.10 of \cite[VII]{M}. Applying Lemma D.11 of \cite[VII]{M} to $y$ also shows that $G_0={\rm Stab}_G(y)$.
\end{proof}

\section{Proof of Theorems \ref{thm1} and \ref{thm2}}\label{proofs}

We now prove our main results, starting with the proof of \Cref{thm2}.

\begin{proof}[Proof of \Cref{thm2}]
By \Cref{reduction},
there exists a generating pair $(A,B)$ for $G$ such that one of cases $(1)-(6)$ of \Cref{lem} holds. 

\underline{Case $(1)$}: $G={\rm Stab}_G(y)$ for some vertex $y$ of $T_q$. By \Cref{disc-stab}, $G$ is discrete if and only if it is finite, so this corresponds to \Cref{thm2} $(a)$.

\underline{Case $(2)$:} $G$ is discrete by \Cref{disc-stab}, so this corresponds to \Cref{thm2} $(b)$.

\underline{Case $(3)$}: If $G$ is discrete, then $A$ and $B$ have finite order by \Cref{disc-stab}. Conversely, if $A$ and $B$ have finite order, then \Cref{fixeq} shows that subcase $(ii)$ does not occur and hence $G$ is discrete by \Cref{disc-stab}. This corresponds to \Cref{thm2} $(c)$.

\underline{Case $(4)$}: We may first assume that no element of the form $A^iB$ is elliptic, as otherwise replacing $B$ by $A^iB$ gives a generating pair for $G$ which lies in cases $(1)$ or $(3)$. Since $\ax(B)$ and $\ax(A^iBA^{-i})=A^i\cdot \ax(B)$ do not intersect with opposite orientations for any integer $i$, it follows from \cite[Proposition 1.7]{P} that $B$ is hyperbolic of minimal translation length among all elements of the form $A^iB$.

If $G$ is discrete, then $A$ has finite order by \Cref{disc-stab}. Conversely, if $A$ has finite order, then \Cref{fixeq} shows that subcase $(iii)$ does not occur and hence $G$ is discrete by \Cref{disc-stab}. This case corresponds to \Cref{thm2} $(d)$.

\underline{Case $(5)$}: It follows from \Cref{disc-stab}, \Cref{ordern} and \Cref{commute} that $G$ is discrete if and only if $A$ has finite order, and $A$ and $B$ commute. This corresponds to \Cref{thm2} $(e)$.

\underline{Case $(6)$}: First observe that $G$ cannot be discrete in subcase $(ii)$. Indeed, \Cref{k} implies that $G_0$ (which fixes $y$) is infinite, so $G$ is not discrete by \Cref{disc-stab}. Thus we may assume that one of subcases $(i)$ and $(iii)$ occurs. It follows from \Cref{disc-stab} and Lemmas \ref{k}--\ref{amalgam} that $G$ is discrete if and only if $G_0=\langle A, BAB^{-1} \rangle$ is finite and the path $P=\fix(A) \cap \ax(B)$ has length $\Delta = l(B)$. Moreover, by \Cref{reflect}, subcases $(i)$ and $(iii)$ respectively correspond to cases $(f)$ and $(g)$ of \Cref{thm2}.
\end{proof}

As a consequence, we obtain the following more detailed version of \Cref{thm1}:

\renewcommand*{\thethm}{\Alph{thm}'}
\setcounter{thm}{0}
\begin{thm}\label{thm1'}
Let $G$ be a discrete two-generator subgroup of $\pslk$, where $K$ is a non-archimedean local field with finite residue field $\mathbb{F}_q$ of characteristic $p$. If $K=\qp$, or $G$ contains no elements of order $p$, then one of the following holds:
\begin{enumerate}[label=$(\alph*)$]
\item $G$ is one of the following finite groups:
\begin{itemize}
\item $C_n$, where either $q \equiv \pm 1 \mod 2n$, or $q$ is even and $q \equiv \pm 1 \mod n$, or $K=\qp$ and $n=p\in\{2,3\}$;
\item $D_n$, where either $q \equiv \pm 1 \mod 2n$, or $K=\qq_2$ and $n=3$;
\item $A_4$, where $p>3$ or $K=\mathbb{Q}_3$;
\item $S_4$, where $q \equiv \pm 1 \mod 8$;
\item $A_5$, where $q \equiv \pm 1 \mod 10$.
\end{itemize}

\item $G$ is discrete and free of rank two;
\item $G\cong C_n * C_m$, where for each $t\in \{n,m\}$ either $q \equiv \pm 1 \mod 2t$, or $q$ is even and $q \equiv \pm 1 \mod t$, or $K=\qp$ and $t=p\in\{2,3\}$;
\item $G\cong C_n * \mathbb{Z}$, where either $q \equiv \pm 1 \mod 2n$, or $q$ is even and $q \equiv \pm 1 \mod n$, or $K=\qp$ and $n=p\in\{2,3\}$;
\item $G \cong \mathbb{Z}$ or $G \cong C_n \times \mathbb{Z}$, where either $q \equiv 1 \mod 2n$, or $q$ is even and $q \equiv 1 \mod n$;
\item $G$ is an HNN extension of one of the following groups:
\begin{itemize}
\item $D_{2n+1}$, where $q \equiv 1 \mod 4$ and $q \equiv \pm 1 \mod (4n+2)$;
\item $A_4$, where $q \equiv 1 \mod 6$.
\end{itemize}
\item $G$ is isomorphic to one of the following groups:
\begin{itemize}
\item $D_{2n+1} *_{C_2} D_2$, where $q \equiv 1 \mod 4$ and $q \equiv \pm 1 \mod (4n+2)$;
\item $A_4 *_{C_3} D_3$, where $q \equiv 1 \mod 6$;
\item $S_4 *_{C_4} D_4$, where $q \equiv 1 \mod 8$;
\item $A_5 *_{C_3} D_3$, where $q \equiv 1,19 \mod 30$;
\item $A_5 *_{C_5} D_5$, where $q \equiv 1 \mod 10$.
\end{itemize}
\end{enumerate}
Moreover, each of these possibilities can occur.
\end{thm}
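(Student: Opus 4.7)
The plan is to translate each geometric case of Theorem~\ref{thm2} into an algebraic one, using Theorem~\ref{lem} to identify the isomorphism type of $G$ and Propositions~\ref{finite},~\ref{ordern}, together with Lemma~\ref{k}, to extract the stated congruence conditions on $q$. The ``moreover'' clause will be handled separately by producing explicit generating pairs in each case, to be carried out in the final section.

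Given a discrete $G$, Theorem~\ref{thm2} supplies a generating pair $(A,B)$ realising one of its cases $(a)$--$(g)$. For $(a)$, the group $G$ fixes a vertex and Proposition~\ref{finite} is itself the required list. For $(b)$--$(d)$, the relevant parts of Theorem~\ref{lem} (namely $(2)$, $(3)$, and $(4)$) directly yield $G$ free of rank two, $G\cong C_n * C_m$, and $G\cong C_n * \mathbb{Z}$; note that in case $(d)$, since $B$ has minimal translation length among $\{A^iB\}$, no $A^iB$ is elliptic, so alternative $(4)(ii)$ cannot occur and we land in $(4)(i)$. Proposition~\ref{finite}, applied to the cyclic factors $\langle A\rangle$ and $\langle B\rangle$, yields the congruence constraints on the orders. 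For $(e)$, commutation of $A$ and $B$ gives either $G\cong\mathbb{Z}$ or $G\cong C_n\times\mathbb{Z}$; by Lemma~\ref{commute}, $\fix(A)=\ax(B)$ is a bi-infinite geodesic, so Proposition~\ref{ordern} excludes the two-vertex possibility and sharpens the congruence from $q\equiv \pm 1$ to $q\equiv 1$.

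Cases $(f)$ and $(g)$ rest on Lemma~\ref{k}, which restricts $G_0=\langle A, BAB^{-1}\rangle$ to one of $D_{2n+1}$, $A_4$, $S_4$, $A_5$ with the exact congruence conditions appearing in Theorem~\ref{thm1'}. In case $(f)$, Theorem~\ref{lem}$(6)(i)$ presents $G$ as the HNN extension $G_0\ast_{\langle B\rangle}$; Lemma~\ref{reflect} combined with Lemma~\ref{k} shows that the ``no reflection'' hypothesis of Theorem~\ref{thm2}$(f)$ is violated precisely when $G_0\in\{S_4,A_5\}$ (since in those cases Lemma~\ref{k} produces $g$ with $g$ and $gB$ both involutions), so only $D_{2n+1}$ and $A_4$ survive. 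In case $(g)$, Lemma~\ref{amalgam} identifies $G\cong G_0 \ast_{C_m} D_m$ where $m=|A|$; for $G_0\cong A_5$ the dichotomy $m\in\{3,5\}$ together with the corresponding congruences recorded in Lemma~\ref{k} splits the two $A_5$-entries.

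The only real bookkeeping subtlety is tracking the refinement from $q\equiv \pm 1$ to $q\equiv 1$ in the cases where $\fix(A)$ must be a bi-infinite ray, and matching each $G_0$ to the HNN versus amalgam alternative via the reflection criterion of Lemma~\ref{reflect}; beyond this, the classification is a direct reading-off from results already in hand. The realisability of each item in $(a)$--$(g)$ is established in the final section by exhibiting an explicit generating pair in $\pslk$.
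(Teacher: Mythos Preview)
Your approach is essentially the paper's own: feed the geometric cases of Theorem~\ref{thm2} through Theorem~\ref{lem}, and read off the isomorphism types and congruence conditions via Proposition~\ref{finite}, Proposition~\ref{ordern}, and Lemma~\ref{k}. Your extra remarks (why $(4)(ii)$ cannot occur in case~$(d)$, and why $S_4$, $A_5$ are excluded in case~$(f)$) are correct and in fact more explicit than the paper's terse ``follows from Lemmas~\ref{k}--\ref{amalgam}''.

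There is, however, one genuine gap in case~$(g)$. To invoke Lemma~\ref{amalgam} you must exhibit $g\in G_0$ such that $g$, $gB$, and $gBA$ are \emph{all} involutions. The hypothesis of Theorem~\ref{thm2}$(g)$ only gives you a reflection in $\ax(B)$ about the terminal vertex~$y$; via Lemma~\ref{reflect} this yields an involution $g$ with $gB$ an involution, but says nothing about $gBA$. Lemma~\ref{k} supplies the full triple only when $G_0\cong S_4$ or $A_5$, so for $G_0\cong D_{2n+1}$ or $A_4$ you are missing a step. The bridge is Lemma~\ref{case5}: given any element of $G_0$ mapping $By$ to $x$, it produces (possibly a different) $g$ with all three of $g$, $gB$, $gBA$ involutions. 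Insert Lemma~\ref{case5} before your appeal to Lemma~\ref{amalgam} and the argument is complete.
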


\begin{proof}
If $G$ is discrete, then one of cases $(a)-(g)$ of \Cref{thm2} holds. In cases $(a)-(e)$, the correspondingly labelled case of \Cref{thm1'} immediately follows by \Cref{finite} and \Cref{lem}. In cases $(f)$ and $(g)$, the corresponding cases of \Cref{thm1'} follow from \Cref{lem} and Lemmas \ref{k}--\ref{amalgam}. The fact that each possibility described in \Cref{thm1'} can occur follows from the examples we exhibit in the next section.
\end{proof}

\section{Examples for \Cref{thm1'} }\label{sec:examples}

In this section, we give explicit examples of the groups listed in \Cref{thm1'}. A pair of representatives in $\slqp$ which generate a discrete and free subgroup of $\pslqp$ can be found in \cite{C} and, by replacing the role of $p$ by the uniformiser $\pi$ of $K$, this gives examples for case $(b)$ of \Cref{thm1'}. Moreover, in considering the other cases below, we will demonstrate pairs of elements generating finite subgroups of $\pslk$, which is case $(a)$ of \Cref{thm1'}. Hence we only consider cases $(c)-(g)$.

\underline{Case $(c)$}:
Let $A,B \in \pslk$ be respectively represented in $\slk$ by 
$$\left[ \begin{array}{cc}
0 & -1 \\ 
1 & t \end{array} \right] \textup{ and } \left[ \begin{array}{cc}
0 & -\pi^{-1} \\
\pi &  s \end{array} \right],$$
where $s,t \in \mathrm{K}$ are chosen so that $A$ has order $n$ and $B$ has order $m$ (where $n,m$ depend on $q$ as in \Cref{thm1'}).
By \Cref{TL}, $s,t \in \mathcal{O}_K$ and it follows that $AB$ is hyperbolic. Hence $\fix(A) \cap \fix(B) = \varnothing$ by \cite[Proposition 1.8]{P}. \Cref{lem} $(3)$ and \Cref{fixeq} show that $G=\langle A, B \rangle \cong C_n * C_m$ and ${\rm Stab}_G(x)\cong C_n$ for every $x \in \fix(A)$. Hence $G$ is discrete by \Cref{disc-stab}.

\underline{Case $(d)$}: Let $A,B \in \pslk$ be respectively represented in $\slk$ by 
$$\left[ \begin{array}{cc}
0 & \pi^2 \\ 
-\pi^{-2} & t \end{array} \right] \textup{ and } \left[ \begin{array}{cc}
\pi^2 & \pi-1 \\
1 &  \pi^{-1} \end{array} \right],$$
where $t \in \mathcal{O}_K$ is chosen so that $A$ has order $n$ (which again depends on $q$ as in \Cref{thm1'}). By \Cref{TL}, $l(B)=2$ and $l(AB)=4>l(B)$. It follows from 
\cite[Proposition 1.7]{P} that $\fix(A) \cap \ax(B) = \varnothing$. \Cref{lem} $(4)$ and \Cref{fixeq} show that $G=\langle A, B \rangle \cong C_n * \mathbb{Z}$ and ${\rm Stab}_G(x)\cong C_n$ for every $x \in \fix(A)$. Hence $G$ is discrete by \Cref{disc-stab}.

\underline{Case $(e)$}: Let $A,B \in \pslk$ be respectively represented in $\slk$ by 
$$\left[ \begin{array}{cc}
\lambda & 0 \\
0 & \lambda^{-1} \end{array} \right] \textup{ and } \left[ \begin{array}{cc}
\pi & 0 \\
0 &  \pi^{-1} \end{array} \right],$$
where $\lambda \in K$. By \Cref{TL}, $l(B)=2$. If $\lambda = \pm 1$, then $A$ is trivial and $G=\langle A, B \rangle \cong \mathbb{Z}$, which is discrete by \Cref{disc-stab}. On the other hand, if $q \equiv 1 \mod 2n$ (respectively $q$ is even and $q \equiv 1 \mod n$) then we may choose $\lambda \in K$ to be a $2n$-th (respectively $n$-th) root of unity. Thus $G=\langle A, B \rangle \cong C_n \times \mathbb{Z}$, which is discrete as the direct product of two discrete groups.

\underline{Cases $(f)$ and $(g)$}: For the remaining cases, let $A,B \in \pslk$ be respectively represented in $\slk$ by
$$\left[ \begin{array}{cc}
a & 1 \\ 
a(t-a)-1 & t-a \end{array} \right]\textup{ and }\left[ \begin{array}{cc}
\pi & 0 \\
0 &  \pi^{-1} \end{array} \right],$$ where $a,t \in K$. By slight abuse of notation, we will use these representatives to define the traces of elements of $G$.

By \Cref{TL}, $B$ is hyperbolic of translation length 2. Also note that
\begin{equation}\label{trace}
s=\tr(ABAB^{-1})=(a^2-at+1)(2-\pi^2-\pi^{-2})+t^2-2.
\end{equation}

We will choose certain values of $s,t \in \oo$ so that $A$ and $ABAB^{-1}$ have specific finite orders, as listed in \Cref{st}. We will then show that:
\begin{itemize}
\item Such an element $A \in \pslk$ exists;
\item The path $P=\fix(A) \cap \ax(B)$ has length $\Delta=l(B)$;
\item The group $G_0=\langle A, BAB^{-1} \rangle$ is one of the finite groups listed in \Cref{k};
\item The group $G=\langle A, B \rangle$ is discrete and is one of the groups described by cases $(f)$ and $(g)$ of \Cref{thm1'}.
\end{itemize}

Since we require that $A$ has finite order and $\fix(A)$ is a bi-infinite ray, we may assume that $t=\zeta+\zeta^{-1}$ for an appropriate root of unity $\zeta\in K$. Multiplying both sides of \Cref{trace} by $\pi^2$ and then reducing modulo $\pi$ yields the equation $a^2-at+1=0$. This equation has discriminant $(\zeta-\zeta^{-1})^2$, which is a non-zero square in $K$, so there are two distinct solutions in the residue field $\mathcal{O}_K/\pi\mathcal{O}_K\cong \mathbb{F}_q$. By Hensel's Lemma, there exists $a \in K$ satisfying \Cref{trace}, and hence such an element $A \in \pslk$ exists. 

Now observe that 
$$\tr(AB^2AB^{-2})=(s-t^2+2)(\pi^2+\pi^{-2}+2)+t^2-2.$$
In each of the cases specified in \Cref{st}, $s-t^2+2$ is non-zero modulo $\pi$ (since $q$ is odd) and hence $v(s-t^2+2)=0$. It follows from \Cref{TL} that $l(AB^2AB^{-2})=4$. Proposition 1.8 of \cite{P} shows that the distance between $\fix(A)$ and $B^2\cdot \fix(A)$ is 2 and hence $\Delta=2=l(B)$.

\begin{table}[h]
\centering
\begin{tabular}{|c|c|c|c|c|c|c|}
\hline
$G_0$ & Case & $t$ & ord($A$) & $s$ & ord($ABAB^{-1}$) & $s-t^2+2$ \\
\hline
$D_{m}$ & $(f)$ & $0$ & $2$ & $\omega_{2m}+\omega_{2m}^{-1}$ & $m$ & $\omega_{2m}+\omega_{2m}^{-1}+2$\\
$D_{m}$ & $(g)$ & $0$ & $2$ & $\omega_{m}+\omega_{m}^{-1}$ & $m$ & $\omega_{m}+\omega_{m}^{-1}+2$\\
$A_4$ & $(f)$ & $1$ & $3$ & $1$ & $3$ & $2$ \\
$A_4$ & $(g)$ & $1$ & $3$ & $0$ & $2$ & $1$ \\
$S_4$ & $(g)$ & $\sqrt{2}$ & 4 & $1$ & 3 & $1$ \\
$A_5$ & $(g)$ & $\frac{1+\sqrt{5}}{2}$ & 5 & 1 & 3 & $\frac{3-\sqrt{5}}{2}$\\
$A_5$ & $(g)$ & 1 & 3 & $\frac{1+\sqrt{5}}{2}$ & 5 & $\frac{3+\sqrt{5}}{2}$ \\
\hline
\end{tabular}
\caption{Values of $s,t\in K$ for each group $G_0$}\label{st}
\end{table}

For $G_0$ to be dihedral, we require that $A$ has order 2, $ABAB^{-1}$ has order $m$ (for some odd positive integer $m$), $q \equiv 1 \mod 4$ and $q \equiv \pm 1 \mod 2m$ (in particular, $G_0$ contains no elements of order $p$). Hence we set $t=0$ and $s=\omega+\omega^{-1}$, where $\omega\in K$ is either a $m$-th root of unity $\omega_m$ or a $2m$-th root of unity $\omega_{2m}$
(corresponding respectively to whether $ABAB^{-1}$ is represented in $\slk$ by a matrix of order $m$ or $2m$). 

By von Dyck's Theorem \cite{Dyck}, $G_0=\langle A, BAB^{-1} \rangle$ is a quotient of $\langle x, y \mid x^2, y^2, (xy)^{m} \rangle \cong D_{m}$, but since $G_0$ contains an element of order $m$ it must be isomorphic to $D_{m}$.

Now every involution in $G_0 \cong D_{m}$ is of the form $g=(ABAB^{-1})^{i}A$ for some $i \in \{0, \dots, m-1\}$. Note that $\tr(AB)=-\tr(AB^{-1})$ and hence
\begin{align*}
\tr(gB)&=\tr((ABAB^{-1})^{i}AB) \\
&=\tr((ABAB^{-1})^{i})\tr(AB)-\tr((ABAB^{-1})^{i}B^{-1}A^{-1})\\
&=(\omega^i+\omega^{-i})\tr(AB)-\tr(AB^{-1}(ABAB^{-1})^{i-1})\\
&=\tr(AB)[\omega^i+\omega^{-i}+\omega^{i-1}+\omega^{-(i-1)}]+\tr(BA^{-1}(ABAB^{-1})^{i-1}).
\end{align*}
By induction, we obtain
$$\tr(gB)=\tr(AB)[\omega^i+\omega^{-i}+\omega^{i-1}+\omega^{-(i-1)}+ \dots +\omega+\omega^{-1}+1].$$
Since $AB$ is hyperbolic, $gB$ is either hyperbolic or an involution. Moreover, $gB$ is an involution if and only if
\begin{align*}
0&=(\omega^i+\omega^{i-1}+\dots +\omega+1)+\omega^{-i}(\omega^{i-1}+ \dots \omega+1)\\
&=\frac{\omega^{i+1}-1}{\omega-1}+\omega^{-i}\frac{\omega^{i}-1}{\omega-1}\\
&=\frac{\omega^{i+1}-\omega^{-i}}{\omega-1}.
\end{align*}
If $\omega=\omega_m$, then $gB$ is an involution when $i=\frac{m-1}{2}$. A similar argument then shows that $gBA$ is also an involution. Hence $G \cong D_m *_{C_2} D_2$ by Lemmas \ref{k} and \ref{amalgam}. On the other hand, if $\omega=\omega_{2m}$, then (since every element of $G_0$ is of the form $(ABAB^{-1})^{i}A^j$ for some $i \in \{0, \dots, m-1\}$ and $j \in \{0,1\}$) it can be shown in a similar way that $gB$ is hyperbolic for every $g \in G_0$. This implies that \Cref{lem} $(6)(iii)$ does not hold: if there exists $g\in G_0$ such that $gBy=B^{-1}y$, then $(gB)^2y=y$ and hence $gB$ is elliptic.
\Cref{k} shows that \Cref{lem} $(6)(ii)$ also does not hold. Hence \Cref{lem} $(6)(i)$ holds and $G$ is an HNN extension of $D_m$. In both cases, ${\rm Stab}_G(y)\cong D_m$ for some vertex $y$ of $T_q$, so $G$ is discrete by \Cref{disc-stab}.

For $G_0$ to be isomorphic to $A_4$, we require that $A$ has order 3 and $q \equiv 1 \mod 6$ (in particular, $G_0$ contains no elements of order $p$). Hence we may choose $t=1$. Let $s \in \{0,1\}$, so that $ABAB^{-1}$ has order 2 or 3. By \Cref{tr1}, $\tr(ABA^{-1}B^{-1})=1-s$ and it follows from von Dyck's Theorem that $\langle A, BAB^{-1} \rangle$ is a quotient of either $\langle x, y \mid x^3, y^3, (xy)^2 \rangle$ or $\langle x, y \mid x^3, y^2, (xy^{-1})^3 \rangle$, both of which are isomorphic to $A_4$. The only non-trivial normal subgroup of $A_4$ is $D_2$, and hence $\langle A, BAB^{-1} \rangle \cong A_4$ by \Cref{k}.

If $s=0$, then let $g=ABA^{-1}B^{-1}A \in G_0$ and observe that
\begin{align*}
\tr(g)&=\tr(ABA^{-1}B^{-1})\tr(A)-\tr(ABA^{-1}B^{-1}A^{-1}) \\
&=\tr(A)-\tr(A^{-1})=0, \textup{ and} \\
\tr(gB)&=\tr(ABA^{-1}B^{-1})\tr(AB)-\tr(ABA^{-1}B^{-2}A^{-1}) \\
&=\tr(AB)-\tr(A^{-1}B^{-1})=0, \textup{ and} \\
\tr(gBA)&=\tr(ABA^{-1}B^{-1})\tr(ABA)-\tr(ABA^{-1}B^{-1}A^{-1}B^{-1}A^{-1}) \\
&=\tr(ABA)-\tr(A^{-1}B^{-1}A^{-1})=0.
\end{align*}
Lemmas \ref{k} and \ref{amalgam} hence show that $G \cong A_4 *_{C_3} D_3$. On the other hand, if $s=1$, then (since every element of $G_0$ is of the form $A^iBA^jB^{-1}A^k$ for some $i,j,k \in \{0,1,2\}$) it can be verified by similar trace computations that $gB$ is hyperbolic for every $g \in G_0$. By the same argument as in the dihedral case, \Cref{lem} $(6)(iii)$ does not hold. \Cref{k} shows that \Cref{lem} $(6)(ii)$ also does not hold, so $G$ is an HNN extension of $A_4$ by \Cref{lem} $(6)(i)$. In both cases, ${\rm Stab}_G(y)\cong A_4$ for some vertex $y$ of $T_q$, so $G$ is discrete by \Cref{disc-stab}.

For $G_0$ to be isomorphic to $S_4$, we require that $A$ has order 4 and $q \equiv 1 \mod 8$. Hence we may choose $t=\sqrt{2}$. Set $s=1$, so that $ABAB^{-1}$ has order 3. Note that $A^2BAB^{-1}$ is an involution, since
\begin{align*}
\tr(A^2BAB^{-1})&=\tr(A)\tr(ABAB^{-1})-\tr(A^{-1}ABAB^{-1})\\
&=\tr(A)-\tr(A)=0.
\end{align*}
Hence $\langle A, BAB^{-1} \rangle$ is a quotient of $\langle x, y \mid x^4, y^4, (xy)^3, (x^2y)^2 \rangle \cong S_4$ by von Dyck's Theorem. Any quotient of $S_4$ by a non-trivial normal subgroup does not contain elements of order 4, so we deduce that $G_0=\langle A, BAB^{-1} \rangle \cong S_4$. As in the proof of \Cref{k}, the element $g=ABA^{-1}B^{-1}A \in G_0$ is such that $g, gB$ and $gBA$ are involutions. Moreover, the only proper subgroup $H$ of $S_4$ which properly contains $C_4$ is $D_8$. Indeed, suppose for a contradiction that $H=\langle A, h \rangle \cong D_8$ for some involution $h \in G_0$. Thus $hAh^{-1}=A^{-1}$ and $h\cdot \fix(A)=\fix(A)$. If this group $H$ fixes at least two vertices, then this implies that $\fix(A)\subseteq \fix(h)$. \Cref{finite} shows that $\fix(A)=\fix(h)$, and this contradicts \Cref{commute} since $q$ is odd and $A$ has order 4. Thus \Cref{amalgam} shows that $G \cong S_4 *_{C_4} D_4$ and ${\rm Stab}_G(y)\cong S_4$ for some vertex $y$ of $T_q$, so $G$ is discrete by \Cref{disc-stab}.

Finally, we consider the case where $G_0 \cong A_5$. We require that either $A$ has order 5 and $q \equiv 1 \mod 10$, or $A$ has order 3 and $q \equiv 1, 19 \mod 30$.

In the former case, we may choose $t=\frac{1+\sqrt{5}}{2}$.
Set $s=1$, so that $ABAB^{-1}$ has order 3.
As in the previous case, observe that $\tr(A^2BAB^{-1})=0$ and so von Dyck's Theorem shows that $\langle A, BAB^{-1} \rangle$ is a quotient of $\langle x, y \mid x^5, y^5, (xy)^3, (x^2y)^2 \rangle \cong A_5$. Since $A_5$ is simple, this gives $\langle A, BAB^{-1} \rangle \cong A_5.$ As in the proof of \Cref{k}, the element $g=(ABA^{-1}B^{-1})^2A \in G_0$ is such that $g, gB$ and $gBA$ are involutions. Moreover, the only proper subgroup of $A_5$ which properly contains $C_5$ is $D_{10}$, so a similar argument to the $S_4$ case shows that $G \cong A_5 *_{C_5} D_5$.

In the latter case, we may choose $t=1$, so that $A$ has order 3.
Set $s=\frac{1+\sqrt{5}}{2}$, so that $ABAB^{-1}$ has order 5. Observe that
\begin{align*}
\tr(A^2BAB^{-1}ABAB^{-1})&=\tr(A)\tr((ABAB^{-1})^2)-\tr(BAB^{-1}ABAB^{-1})\\
&=(s^2-2)-\tr(AB^{-1}ABA)\\
&=(s^2-2)-\tr(A)\tr(ABAB^{-1})+\tr(A^{-1}B^{-1}ABA)\\
&=s^2-s-1=0.
\end{align*}
Since $A_5$ is simple, von Dyck's Theorem shows that $\langle A, BAB^{-1} \rangle\cong \langle x, y \mid x^3, y^3, (xy)^5, (x^2yxy)^2 \rangle \cong A_5$. Since $q$ is not divisible by 2,3 or 5, $G_0$ contains no elements of order $p$ and thus Lemmas \ref{k} and \ref{amalgam} show that $G \cong A_5 *_{C_3} D_3$. In both cases, ${\rm Stab}_G(y)\cong A_5$ for some vertex $y$ of $T_q$, so $G$ is discrete by \Cref{disc-stab}.

\section{Algorithms for discreteness and density}\label{sec:algorithms}\label{algs-section}

We now present an algorithm which takes as input two elements $A,B$ of $\pslk$ (where either $K=\qp$, or $G = \langle A, B \rangle$ contains no elements of order $p$) and decides whether or not the subgroup $G = \langle A, B \rangle$ is discrete. If $G$ is discrete, then the algorithm returns the isomorphism type of $G$ according to \Cref{thm1}. The algorithm relies on computing translation lengths on $T_q$ using \Cref{TL}.

\begin{alg}\label{discalg}
\underline{Input}: Two elements $A,B \in \pslk$, where either $K=\qp$ or the subgroup $G=\langle A, B \rangle$ of $\pslk$ contains no elements of order $p$.

\underline{Output}: {\tt true:case (*)} if $G=\langle A, B \rangle$ is discrete and of the type described in case $(*)$ of \Cref{thm1}, and {\tt false} otherwise.
\begin{enumerate}[label={$(\arabic*)$}]
\item If $G=\langle A, B \rangle$ is finite, then return {\tt true:case (a)}.
\item Set $X=A$ and $Y=B$. 
\item If $l(X)>l(Y)$, then swap $X$ and $Y$. 
\item If $l(X)>0$, then compute $m=\min\{l(XY), l(X^{-1}Y)\}$.
\begin{enumerate}[label={$(\roman*)$}]
\item If $m\le l(Y)-l(X)$ then replace $Y$ by an element from $\{XY, X^{-1}Y\}$ which has translation length $m$ and return to $(3)$.
\item If $m>l(Y)-l(X)$, then return {\tt true:case (b)}.
\end{enumerate}
\item If $X$ has infinite order, then return {\tt false}.
\item Let $n$ be the order of $X$. If $l(X^iY)<l(Y)$ for some $i \in \{1, \dots, n-1 \}$, then replace $Y$ by $X^iY$.
\item If $l(Y)=0$, then return {\tt true:case (c)} if $Y$ has finite order and $l(XY)>0$, and otherwise return {\tt false}.
\item If $l([X,Y])>0$, then return {\tt true:case (d)}.
\item If $[X,Y]$ has infinite order, then return {\tt false}.
\item If $[X,Y]$ is trivial, then return {\tt true:case (e)}.
\item If $l([X,Y^2])=0$, then return {\tt false}.
\item Set $G_0=\langle X, YXY^{-1} \rangle$. If $G_0$ is infinite, then return {\tt false}.
\item If there is no element $g\in G_0$ such that $g$ and $gY$ both have order 2, then return {\tt true:case (f)}.
\item Return {\tt true:case (g)}.
\end{enumerate}
\end{alg}

\begin{remark}
Algorithm \textup{\ref{discalg}} includes several steps which involve determining the order of a subgroup or an element of $\pslk$. By Proposition \textup{\ref{finite}}, there is a short list of finite subgroups of $\pslk$ and hence a short list of possibilities to check at each of these steps.
\end{remark}

\begin{remark}
It is straightforward to determine the precise isomorphism class of a group described by cases $(a)-(g)$ of Theorem \textup{\ref{thm1}}, but we omit this step from the algorithm for brevity. 
\end{remark}

\begin{theorem}\label{thm:discalg}
Algorithm \textup{\ref{discalg}} terminates and produces the correct output.
\end{theorem}
\begin{proof}
The algorithm will always terminate, since the only recursive step is $(4)(i)$ and this strictly reduces the integer $l(X)+l(Y)$; see also \cite[Theorem 4.2]{C}. It remains to prove that the algorithm is correct. If the algorithm returns:

\begin{itemize}
\item {\tt true:case (a)}, then $G$ is finite and hence discrete.
\item {\tt true:case (b)}, then $G$ is discrete and free by \cite[Corollary 3.6]{C}.
\item {\tt true:case (c)}, then $X$ and $Y$ are elliptic of finite order. Since $l(XY)>0$, their fixed point sets are disjoint by \cite[Proposition 1.8]{P}. Thus $G$ is discrete by \Cref{thm2} $(c)$ and \Cref{thm1'} $(c)$ holds.
\item {\tt true:case (d)}, then $X$ is elliptic of finite order and $Y$ is hyperbolic. Since $l([X,Y])>0$, the path $\fix(X)\cap \ax(Y)$ is either empty or of length shorter than $l(Y)$ by \cite[Proposition 1.8]{P}. Hence $G$ is discrete by \Cref{thm2} $(d)$ and \Cref{thm1'} $(d)$ holds.
\item {\tt true:case (e)}, then $X$ is elliptic of finite order (possibly trivial), $Y$ is hyperbolic, and $X$ and $Y$ commute. Hence $G$ is discrete by \Cref{thm2} $(e)$ and \Cref{thm1'} $(e)$ holds.
\item {\tt true:case (f)}, then $X$ is elliptic of finite order, $Y$ is hyperbolic, and $P=\fix(X)\cap \ax(Y)$ has length $l(B) \le \Delta <2l(B)$ by \cite[Proposition 1.8]{P} since $[X,Y]$ is elliptic and $[X,Y^2]$ is not. Also $G_0=\langle A, BAB^{-1} \rangle$ is finite and hence $\Delta=l(Y)$ by \Cref{k}. By \Cref{reflect}, $G_0$ does not contain a reflection in $\ax(Y)$ about the terminal vertex of $P$, so $G$ is discrete by \Cref{thm2} $(f)$ and \Cref{thm1'} $(f)$ holds.
\item {\tt true:case (g)}, then, by the same argument as above, $X$ is elliptic of finite order, $Y$ is hyperbolic, $G_0=\langle A, BAB^{-1}\rangle$ is finite and $P=\fix(X)\cap \ax(Y)$ has length $l(Y)$. Moreover, \Cref{reflect} shows that $G_0$ does contain a reflection in $\ax(Y)$ about the terminal vertex of $P$, hence $G$ is discrete by \Cref{thm2} $(g)$ and \Cref{thm1'} $(g)$ holds.
\end{itemize}
On the other hand, if the algorithm returns {\tt false} (at steps $(5),(7)$, $(9)$, $(11)$ or $(12)$), then $G_0=\langle X, YXY^{-1}\rangle$ is infinite (this follows from \Cref{k} in the case of step $(11)$). Since $G_0$ fixes the terminal vertex of $\fix(X)\cap \ax(Y)$, $G$ is not discrete by \Cref{disc-stab}. 
\end{proof}

\begin{remark}
\Cref{discalg} has been implemented in {\sc Magma} where the input is a pair of representative matrices in $\slqp$; see \cite{C2}. It runs very efficiently ($<$ 0.03s) for each of the examples discussed in \Cref{sec:examples}. For randomly generated pairs of elements of ${\rm SL_2}(\mathbb{Q})\le \slqp$ (where each entry is a randomly generated numerator and denominator in $[-10^{10}, 10^{10}]$) the algorithm has an average runtime (across 1000 trials) of less than 0.003s for all primes $p<17$. The algorithm will also always terminate if the precision of the matrix entries is large enough compared to number of iterations of step $(4)$; see \cite{C} for further detail.
\end{remark}

We conclude by presenting algorithms (Algorithms \ref{density-algorithm} and \ref{density-algorithm-R} respectively) to decide whether a two-generator subgroup of either $\slr$ or $\slk$ (where $K$ is a finite extension of $\qp$) is dense. We will use the following results, where $F$ is either $\mathbb{R}$, $\mathbb{C}$, or a finite extension of $\qp$, and $X$ is accordingly the hyperbolic plane, the Riemann sphere, or the corresponding Bruhat-Tits tree.

\begin{lem}\label{dense-lemma}
Let $G$ be a subgroup of $\slf$.
\begin{itemize}
\item[$(i)$] If $G$ is not Zariski dense, then either $G$ fixes a point of $X$, or $G$ stabilises a set of one or two points on the boundary $\partial X$.
\item[$(ii)$] If $G$ fixes a point of $X$, then $G$ is not dense.
\item[$(iii)$] If $G$ stabilises a set of one or two points of $\partial X$, then $G$ is not Zariski dense.
\end{itemize}
\end{lem}

\begin{proof}
$(i)$: A Zariski closed proper subgroup of $\slf$ does not contain a non-abelian free subgroup. If $G$ is not Zariski dense, then it follows from \cite[Section 3.1]{Gromov} that $G$ either fixes a point of $X$ or stabilises a set of one or two points of $\partial X$.

$(ii)$: If $G$ fixes a point $x\in X$, then its closure in $\slf$ also fixes $x$. Since $\slf$ has no fixed points in $X$, this implies that $G$ is not dense.

$(iii)$: Observe that if $G$ stabilises one or two points of $\partial X$, then the Zariski closure of $G$ is at most two-dimensional. Since $\mathrm{SL}_2$ is 3-dimensional, $G$ is not Zariski dense.
\end{proof}

Retaining the notation from the previous lemma, we also obtain:
\begin{lem}\label{zdense-dense}
If a subgroup $G$ of $\slf$ is Zariski dense, then either $G$ is discrete, $G$ is dense, or $G$ fixes a point of $X$. 
\end{lem}
\begin{proof}
Consider $\slf$ as a finite-dimensional Lie group over either $\mathbb{R}$ or $\qp$, and let $H$ denote the closure of $G$ in $\slf$. By Cartan's closed subgroup theorem \cite[Chapter III, \S 8 Theorem 2]{B}, $H$ is a Lie subgroup of $\slf$. Since $G$ is Zariski dense, the Lie algebra $\mathfrak{h}$ of $H$ is invariant under the adjoint action of $\slf$. Hence $\mathfrak{h}$ is an ideal of the simple Lie algebra $\mathfrak{sl}_2$ corresponding to $\slf$, so either $\mathfrak{h} = 0$ or $\mathfrak{h} = \mathfrak{sl}_2$. By \cite[Chapter III, \S 4 Theorem 3]{B}, either $H$ is discrete or $H$ is open. Suppose that $G$ is neither discrete nor dense, so that $H$ is a proper open subgroup of $\slf$. By a result of Tits \cite[3.6.2]{T2} $H$ is bounded and thus the Bruhat--Tits fixed point theorem \cite[Proposition 3.2.4]{Bru-Tit:72} shows that $G$ fixes a point of $X$.
\end{proof}

As suggested by Pierre-Emmanuel Caprace, we can apply \Cref{discalg} to obtain an algorithm which decides if a two-generator subgroup of $\slk$ is dense, where $K$ is either real or a $p$-adic field. We start with the $p$-adic field case, in which every dense subgroup of $\slk$ has the structure of an amalgamated free product \cite[Chapter II, Theorem 3]{S}.

\begin{alg}\label{density-algorithm}
\underline{Input}: Two elements $A,B \in \slk$, where $K$ is a finite extension of $\qp$.

\underline{Output}: {\tt true} if $G=\langle A, B \rangle$ is dense in $\slk$, and {\tt false} otherwise. 

\begin{enumerate}[label={$(\arabic*)$}]
\item If \textup{\Cref{discalg}} shows that the corresponding subgroup $\bar{G}$ of $\pslk$ is discrete, then return {\tt false}. Otherwise let $X$ and $Y$ be generators of $G$ at the step where \textup{\Cref{discalg}} terminates.
\item If $l(Y)=0$ and $l(XY)=0$, then return {\tt false}.
\item If $l(Y)=0$ and $l(XY)>0$, then return {\tt true}.
\item If $\tr([X,Y])=2$, then return {\tt false}.
\item If $[XYX^{-1}, Y]$ is trivial, then return {\tt false}.
\item Return {\tt true}.
\end{enumerate}
\end{alg}

\begin{theorem}\label{density-thm}
Algorithm \textup{\ref{density-algorithm}} terminates and produces the correct output.
\end{theorem}
\begin{proof}
If \Cref{density-algorithm} returns {\tt false} at step $(1)$, then $G$ is not dense because it is discrete. For the remainder of the proof, we may therefore assume that $G=\langle X, Y \rangle$ is not discrete and (since \Cref{discalg} applied to $\bar{G}$ returns {\tt false}) that $X$ is elliptic.

Suppose first  that $Y$ is elliptic. By \cite[Proposition 1.8]{P}, $XY$ is elliptic if and only if $G$ fixes a vertex of the associated Bruhat-Tits tree $T_q$. Hence if \Cref{density-algorithm} returns {\tt false} at step $(2)$, then $G$ is not dense by \Cref{dense-lemma} $(ii)$. If $XY$ is hyperbolic, then $X$ and $Y$ cannot have a common fixed point on the boundary $\partial T_q$, as otherwise $\fix(X) \cap \fix(Y)\neq \varnothing$. Similarly, $X$ cannot interchange any fixed points of $Y$ on $\partial T_q$ (or vice versa). By Lemmas \ref{dense-lemma} and \ref{zdense-dense}, $G$ is dense if \Cref{density-algorithm} returns {\tt true} at step $(3)$.

Hence we may assume that $Y$ is hyperbolic and therefore fixes two points of $\partial T_q$. In particular, $G$ does not fix a vertex of $T_q$. By identifying the boundary $\partial T_q$ with the projective line $\mathbb{P}^1(K)$ \cite[p. 72]{S}, we may assume (after conjugation if necessary) that $Y$ is a diagonal matrix. Hence a standard trace computation shows that $G$ fixes a point of $\partial T_q$ if and only if $\tr([X,Y])=2$; see \cite[Theorem 4.3.5(i)]{Beardon}. Similarly, $X$ interchanges the two fixed points of $Y$ on $\partial T_q$ if and only if $\tr([X,Y])\neq 2$ and $[XYX^{-1},Y]$ is trivial; see \cite[Theorem 4.3.5(ii)]{Beardon}. Thus if \Cref{density-algorithm} returns {\tt false} at steps $(4)$ or $(5)$, then $G$ is not dense by \Cref{dense-lemma} $(iii)$.

Finally, if \Cref{density-algorithm} reaches step $(6)$, then $G$ is not discrete, it does not fix a vertex of $T_q$ and it does not stabilise a set of one or two points of $\partial T_q$. It follows from \Cref{dense-lemma} $(i)$ that $G$ is Zariski dense, and hence \Cref{zdense-dense} shows that $G$ is dense.
\end{proof}

\begin{remark}
\Cref{density-algorithm} has been implemented in {\sc Magma} \cite{C2}, and it will terminate under the same conditions discussed above for \Cref{discalg}. For randomly generated pairs of elements of ${\rm SL_2}(\mathbb{Q})\le \slqp$ (where each entry is a randomly generated numerator and denominator in $[-10^{10}, 10^{10}]$) the algorithm has an average runtime (across 1000 trials) of less than 0.004s for all primes $p<17$.
\end{remark}

Using similar techniques, we also obtain the following algorithm to decide whether a two-generator subgroup of $\slr$ is dense.

\begin{alg}\label{density-algorithm-R}
\underline{Input}: Two elements $A,B \in \slr$.

\underline{Output}: {\tt true} if $G=\langle A, B \rangle$ is dense in $\slr$, and {\tt false} otherwise. 

\begin{enumerate}[label={$(\arabic*)$}]
\item If $\tr([A,B])=2$, then return {\tt false}.
\item If $[A,BAB^{-1}]$ or $[ABA^{-1},B]$ is trivial, then return {\tt false}.
\item If $\tr([A,B])<2$ and Algorithm 1 of \cite{KR} shows that $G$ is discrete, then return {\tt false}.
\item If $\tr([A,B])>2$ and Algorithm 2 of \cite{KR} shows that $G$ is discrete, then return {\tt false}.
\item Return {\tt true}.
\end{enumerate}
\end{alg}

\begin{theorem}\label{density-thm-R}
Algorithm \textup{\ref{density-algorithm-R}} terminates and produces the correct output.
\end{theorem}
\begin{proof}
Note that $\tr([A,B])=2$ if and only if $A$ and $B$ have a common fixed point in their action as M\"{o}bius transformations of the Riemann sphere. Since the two fixed points (counted with multiplicity) of any such M\"{o}bius transformation either both lie on $\partial \mathbb{H}^2$ or are complex conjugates, it follows that $\tr([A,B])=2$ if and only if $G$ fixes a point of $\mathbb{H}^2$ or $\partial \mathbb{H}^2$. Similarly, since $\slr$ preserves $\mathbb{H}^2$, $\tr([A,B])\neq 2$ and $[A,BAB^{-1}]$ is trivial if and only if $A$ is hyperbolic and $B$ interchanges the fixed points of $A$ on $\partial \mathbb{H}^2$ (and a similar argument holds for the case where $\tr([A,B])\neq 2$ and $[ABA^{-1},B]$ is trivial). It follows from \Cref{dense-lemma} $(ii)$ and $(iii)$ that $G$ is not dense if \Cref{density-algorithm-R} returns {\tt false} at steps $(1)$ or $(2)$.

We may now assume that $G$ does not fix a point of $\mathbb{H}^2$ and does not stabilise a set of one or two points of $\partial \mathbb{H}^2$. In particular, $G$ is Zariski dense by \Cref{dense-lemma} $(i)$, and $G$ is a non-elementary subgroup of $\slr$. Moreover, Algorithms 1 or 2 of \cite{KR} can be used to determine whether or not $G$ is discrete. Hence $G$ is not dense if \Cref{density-algorithm-R} returns {\tt false} at steps $(3)$ or $(4)$, and \Cref{zdense-dense} shows that $G$ is dense if \Cref{density-algorithm-R} reaches step $(5)$.
\end{proof}

\begin{remark}
We have also implemented \Cref{density-algorithm-R} in {\sc Magma}; see \cite{C2}. As for Algorithms 1 and 2 of \cite{KR}, this implementation assumes that $G=\langle A, B \rangle$ is a subgroup of $\slf$, where $F$ is a real algebraic number field. For randomly generated pairs of elements of ${\rm SL_2}(\mathbb{Q})\le \slr$ (where each entry is a randomly generated numerator and denominator in $[-10^{10}, 10^{10}]$) the algorithm has an average runtime (across 1000 trials) of less than 0.002s.
\end{remark}

{\bf Acknowledgements} 

Both authors are very grateful to Gaven Martin for suggesting the problem to them, and to Pierre-Emmanuel Caprace for the ideas in Algorithms \ref{density-algorithm} and \ref{density-algorithm-R}.
During this work, both authors have been supported by a University of Auckland FRDF grant and the New Zealand Marsden fund. The first author also acknowledges the support of the Woolf Fisher Trust and Rutherford Foundation.



\end{document}